\theoremstyle{plain}
\newtheorem{thm}{Theorem}[section]
\newtheorem{theorem}[thm]{Theorem}
\newtheorem{proposition}[thm]{Proposition}
\newtheorem{corollary}[thm]{Corollary}
\newtheorem{conjecture}[thm]{Conjecture}
\theoremstyle{definition}
\newtheorem{definition}[thm]{Definition}
\newtheorem{remark}[thm]{Remark}
\newtheorem{example}[thm]{Example}
\newtheorem{problem}[thm]{Problem}
\newtheorem{thevarthm}[thm]{\varthmname}
\newenvironment{varthm*}[1]{\trivlist\item[]{\bf #1.}\it}{\endtrivlist}
\def\keywordname{{\bfseries Keywords}}%
\def\keywords#1{\par\addvspace\medskipamount{\rightskip=0pt plus1cm
\def\and{\ifhmode\unskip\nobreak\fi\ $\cdot$
}\noindent\keywordname\enspace\ignorespaces#1\par}}
\def\subclassname{{\bfseries Mathematics Subject Classification
(2010)}\enspace}
\def\subclass#1{\par\addvspace\medskipamount{\rightskip=0pt plus1cm
\def\and{\ifhmode\unskip\nobreak\fi\ $\cdot$
}\noindent\subclassname\ignorespaces#1\par}}
\begin{document}
\title{Hirzebruch-type inequalities viewed as tools in combinatorics}
\author{Piotr Pokora}
\date{\today}
\maketitle
\begin{abstract}
The main purpose of this survey is to provide an introduction, algebro-topological in nature, to Hirzebuch-type inequalities for plane curve arrangements in the complex projective plane. These inequalities gain more and more interest due to their utility in many combinatorial problems related to point or line arrangements in the plane. We would like to present a summary of the technicalities and also some recent applications, for instance in the context of the Weak Dirac Conjecture. We also advertise some open problems and questions.
\keywords{Hirzebruch-Kummer covers, curve arrangements, line arrangements, pseudoline arrangements, simplicial arrangements of lines, Dirac's conjecture, Beck's Theorem of two extremes}
\subclass{14N10, 52C35, 32S22, 14N20}
\end{abstract}
\section{Introduction}
In combinatorics, there are many interesting point-line incidence problems. Probably the most classical one is due to Sylvester \cite{Sylvester}.
\begin{problem}
\textit{Prove that it is not possible to arrange any finite number of real points so that a right line through every two of them shall pass through a third, unless they all lie in the same right line.}
\end{problem}
This problem is also related to the famous orchard problem proposed by Jackson as a \emph{rational amusement for winter evenings} \cite{Jackson}. Gallai \cite{Gallai} proved that Sylvester's problem has a positive answer.
\newpage
\begin{theorem}[Sylvester-Gallai]
Let $\mathcal{P}\subset \mathbb{R}^{2}$ be a finite set of points. Then either
\begin{itemize}
\item all points in $\mathcal{P}$ are collinear, or
\item there exists a line $\ell$ passing through exactly two points from $\mathcal{P}$.
\end{itemize}
\end{theorem}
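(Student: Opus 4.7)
The plan is to follow the classical extremal-principle proof due to L.~M.~Kelly. Supposing for contradiction that $\mathcal{P}$ is not collinear and yet every line spanned by two points of $\mathcal{P}$ contains at least three of them, I would consider the finite, nonempty set $\mathcal{I}$ of pairs $(P,\ell)$ with $P\in\mathcal{P}$, $\ell$ spanned by at least two points of $\mathcal{P}$, and $P\notin\ell$; nonemptiness of $\mathcal{I}$ follows from the non-collinearity assumption. I would then select $(P_{0},\ell_{0})\in\mathcal{I}$ minimising the Euclidean distance $d(P_{0},\ell_{0})$. This single descent move is the engine from which the contradiction will flow.

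Next I would drop the perpendicular from $P_{0}$ to $\ell_{0}$, with foot $Q$. Because $\ell_{0}$ contains at least three points of $\mathcal{P}$, the pigeonhole principle puts at least two of them, say $A$ and $B$, on one common closed ray of $\ell_{0}$ emanating from $Q$; take $A$ to be the one closer to $Q$ (possibly $A=Q$). The key claim is that the pair $(A,\overline{P_{0}B})$ lies in $\mathcal{I}$ and satisfies $d(A,\overline{P_{0}B})<d(P_{0},\ell_{0})$. Membership in $\mathcal{I}$ is routine: $P_{0},B\in\mathcal{P}$ so $\overline{P_{0}B}$ is spanned by two points of $\mathcal{P}$, and $A\notin\overline{P_{0}B}$ since otherwise the collinear points $A,B\in\ell_{0}$ and $P_{0}$ would force $P_{0}\in\ell_{0}$.

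The strict-inequality claim reduces, after placing $Q$ at the origin, $\ell_{0}$ along the $x$-axis and $P_{0}=(0,h)$, to the elementary estimate $h(b-a)/\sqrt{h^{2}+b^{2}}<h$ for $0\le a<b$, which follows by squaring. This contradicts the minimality of $(P_{0},\ell_{0})$ and proves the theorem. The main obstacle, and the point where a careless write-up goes wrong, is the bookkeeping at the foot $Q$: one must allow the possibility $Q\in\mathcal{P}$, in which case $Q$ could itself be one of the three forced collinear points, and the pigeonhole step must therefore be applied to \emph{closed} half-rays rather than open ones, with the convention $A=Q$ when needed. Once that case is folded into the coordinate computation, the descent step works uniformly and the whole argument occupies only a few lines.
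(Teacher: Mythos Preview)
Your argument is the classical L.~M.~Kelly descent proof, written out carefully and correctly; in particular, your treatment of the case where the perpendicular foot $Q$ may coincide with a point of $\mathcal{P}$ (working with closed half-rays and allowing $A=Q$) is exactly the delicate point, and your coordinate computation $h(b-a)/\sqrt{h^{2}+b^{2}}<h$ for $0\le a<b$, $h>0$ is valid since $(b-a)^{2}\le b^{2}<h^{2}+b^{2}$. The paper itself does not give a proof but simply cites Kelly's argument as ``probably the most instructive one'', so your proposal is precisely the proof the paper is pointing to.
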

There are several elegant proofs of this theorem. Probably the most instructive one is given by L. M. Kelly which can be found, for instance, in \cite{Proof}. Using duality in the projective plane we can formulate Sylvester-Gallai Theorem in the language of line arrangements and their intersection points, i.e., every line arrangement in the real projective plane consisting of at least $3$ lines, which is not a pencil, contains at least one double intersection point. This can be also observed using the well-known Melchior's inequality \cite{Melchior}. For an arrangement of lines $\mathcal{L} = \{\ell_{1}, ...,\ell_{d}\}$ in the projective plane we denote by $t_{r} = t_{r}(\mathcal{L})$ the number of $r$-fold points, i.e., points where exactly $r$-lines from the arrangement meet.
\begin{theorem}[Melchior]
Let $\mathcal{L} = \{\ell_{1}, ..., \ell_{d}\} \subset \mathbb{P}^{2}_{\mathbb{R}}$ be an arrangement of $d\geq 3$ lines. Assume that $\mathcal{L}$ is not a pencil, then
$$t_{2} \geq 3 + \sum_{r\geq 3}(r-3)t_{r}.$$
\end{theorem}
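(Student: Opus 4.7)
The plan is to obtain Melchior's inequality by running an Euler characteristic computation on the cell decomposition of $\mathbb{P}^{2}_{\mathbb{R}}$ induced by the arrangement $\mathcal{L}$, and then exploiting a lower bound on the number of sides of each $2$-cell.

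First I would set up the CW structure. The $0$-cells are the intersection points of $\mathcal{L}$, so their number is $V=\sum_{r\geq 2}t_{r}$. On each line $\ell_{i}$ the singular points cut the projective line (topologically a circle) into as many arcs as there are points on $\ell_{i}$; summing over lines and recalling that a point of multiplicity $r$ lies on exactly $r$ lines, the number of $1$-cells equals $E=\sum_{r\geq 2}r\,t_{r}$. Denoting by $F$ the number of $2$-cells, the Euler characteristic identity $V-E+F=\chi(\mathbb{P}^{2}_{\mathbb{R}})=1$ yields
$$F=1+\sum_{r\geq 2}(r-1)t_{r}.$$

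The second ingredient is a face-degree inequality. Under the standing assumption that $\mathcal{L}$ is not a pencil, any two lines of $\mathcal{L}$ meet in a single point, so no $2$-cell can be bounded by only two edges; hence every face has at least three sides. Counting incidences between edges and faces, and using that each edge bounds exactly two faces, gives $3F\leq 2E$.

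Finally I would plug the formulas for $E$ and $F$ into $3F\leq 2E$ and rearrange:
$$3+\sum_{r\geq 2}\bigl(3(r-1)-2r\bigr)t_{r}\leq 0,$$
which simplifies to $3+\sum_{r\geq 2}(r-3)t_{r}\leq 0$, i.e.\ $t_{2}\geq 3+\sum_{r\geq 4}(r-3)t_{r}$. The only delicate point in this plan is verifying the face inequality $3F\leq 2E$; this is where the hypothesis that $\mathcal{L}$ is not a pencil is essential, since in a pencil the complement consists of lunes (two-sided faces) and the argument collapses. Everything else is routine counting, so I expect no further obstacle.
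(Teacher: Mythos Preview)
Your argument is correct and follows exactly the Euler-characteristic approach that the paper indicates (the paper only gives a one-line sketch, noting that Melchior's proof uses the cell decomposition of $\mathbb{P}^{2}_{\mathbb{R}}$ and the identity $v-e+f=1$); you have simply filled in the standard details. The only point you flag as delicate---ruling out bigons so that $3F\le 2E$---is handled by the observation that two distinct projective lines meet in a single point, which you already note.
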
 
Melchior's proof is based on a simple observation that every line arrangement in the real projective plane provides a partition of the space into $f$ regions, $e$ edges, and $v$ vertices, and then we can use the identity $v - e + f = e( \mathbb{P}^{2}_{\mathbb{R}}) = 1$. In fact, using the same method one can construct a whole series of Melchior-type inequalities, which seems to be a folklore result (this was shown for instance in a student paper \cite{Swornog}).
\begin{theorem}[Melchior-type inequality]
Let $\mathcal{L} = \{\ell_{1}, ..., \ell_{d}\} \subset \mathbb{P}^{2}_{\mathbb{R}}$ be an arrangement of $d\geq 3$ lines. Assume that $\mathcal{L}$ is not a pencil and pick $k \in \mathbb{Z}_{\geq 1}$, then
$$\sum_{r=2}^{2k}(2k+1-r)t_{r} \geq 2k+1 + \sum_{r \geq 2k+1} (r - (2k+1))t_{r}.$$
In particular, for $k=1$ we recover Melchior's inequality.
\end{theorem}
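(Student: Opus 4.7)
The plan is to mimic Melchior's original topological argument and then insert one elementary monotonicity step to promote the $k=1$ case to the full $k$-parameter family. First I would fix the cellular decomposition of $\mathbb{P}^{2}_{\mathbb{R}}$ induced by $\mathcal{L}$ and record the combinatorial identities
\[
v \;=\; \sum_{r \geq 2} t_{r}, \qquad e \;=\; \sum_{r \geq 2} r\, t_{r},
\]
the second by the usual double count together with the fact that $n$ distinct points on a projective line divide it into $n$ arcs. Plugging these into Euler's formula $v - e + f = \chi(\mathbb{P}^{2}_{\mathbb{R}}) = 1$ yields $f = 1 + \sum_{r \geq 2}(r-1)\, t_{r}$.

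Because $\mathcal{L}$ has at least $3$ lines and is not a pencil, no face of the arrangement can be a digon, so every face is bounded by at least three edges. Counting edge--face incidences in two ways then gives $2e \geq 3f$. Substituting the formulas above and simplifying produces the clean bound $e \leq 3(v-1)$, which, once unravelled, is exactly Melchior's inequality---this handles $k=1$.

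For arbitrary $k \geq 1$ the rest is arithmetic: I would split
\[
(2k+1)v - e \;=\; (3v - e) + (2k-2)v \;\geq\; 3 + (2k-2) \;=\; 2k+1,
\]
where $3v - e \geq 3$ is the case $k=1$ just proved and $(2k-2)v \geq 2k-2$ follows from the trivial observation $v \geq 1$ (vacuous for $k=1$). Finally, rewriting the left-hand side as $\sum_{r \geq 2}(2k+1-r)\, t_{r}$ and separating the sum according to the sign of the coefficient $2k+1-r$---the ranges $2 \leq r \leq 2k$ (positive), $r = 2k+1$ (zero), and $r \geq 2k+2$ (negative, moved to the right-hand side)---gives the stated inequality. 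The only non-routine input is the face-length lower bound $2e \geq 3f$, which is standard for non-pencil line arrangements with $d \geq 3$; beyond this I foresee no real obstacle.
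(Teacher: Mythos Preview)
Your argument is correct: the Euler--characteristic count $v-e+f=1$ together with $2e\geq 3f$ gives $3v-e\geq 3$, and your decomposition $(2k+1)v-e=(3v-e)+(2k-2)v\geq 3+(2k-2)$ using $v\geq 1$ cleanly yields the general statement after splitting the sum by the sign of $2k+1-r$. The paper does not spell out a proof but only indicates that the same Euler--characteristic method as in Melchior's original argument produces the whole family; your write-up does exactly this for $k=1$ and then appends the elementary monotonicity step, so it is entirely in line with the approach the paper points to.
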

It was natural to ask whether Melchior's inequality can hold if we change the underlying  field, for instance if we consider a finite projective plane or the complex projective plane. In both cases the answer is negative. 

In the first case, consider $\mathbb{P}^{2}_{\mathbb{Z}_{2}}$ - the Fano plane. It is known that there exists a unique configuration of $7$ lines and $7$ points of multiplicity $3$, which obviously violates Melchior's inequality. Secondly, let us consider the following line arrangement in the complex projective plane defined by the linear factors of the polynomial
$$Q(x,y,z) = (z^{3}-y^{3})(y^{3}-z^{3})(x^{3}-z^{3}).$$
It can be seen that $Q$ defines the arrangement consisting of $9$ lines and $12$ triple intersection points, so it obviously violates Melchior's inequality. The arrangement defined by $Q$ is known as the dual Hesse arrangement of lines (or Ceva's arrangement of $9$ lines, as defined in \cite{IgorD}).

The above (counter)examples motivated researchers to find reasonable generalizations of Melchior's inequality (mostly over the complex numbers) involving the number of lines and $t_{r}$'s. It is worth mentioning that Iitaka \cite{Iitaka} claimed to prove (erroneously) that Melchior's inequality holds for line arrangements in the complex projective plane, which shows that the problem attracted the attention of people working in algebraic geometry. The breakthrough came with Hirzebruch's famous paper \cite{Hirzebruch}.
\begin{theorem}[Hirzebruch's inequality]
\label{17}
Let $\mathcal{L} = \{\ell_{1}, ..., \ell_{d}\} \subset \mathbb{P}^{2}_{\mathbb{C}}$ be an arrangement of $d\geq 4$ lines such that $t_{d} = t_{d-1} = 0$, then
$$t_{2} + t_{3} \geq d + \sum_{r\geq 5}(r-4)t_{r}.$$
\end{theorem}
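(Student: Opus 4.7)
The plan is to apply the Bogomolov--Miyaoka--Yau (BMY) inequality to a branched Galois cover of $\mathbb{P}^{2}_{\mathbb{C}}$ totally ramified along $\mathcal{L}$. Fix an integer $n \geq 2$ and let $\pi_{n}\colon X_{n} \to \mathbb{P}^{2}$ denote the Hirzebruch--Kummer cover of exponent $n$ associated to $\mathcal{L}$, i.e.\ the abelian cover with Galois group $(\mathbb{Z}/n\mathbb{Z})^{d-1}$ that is totally ramified of order $n$ along each line of $\mathcal{L}$. Above an $r$-fold point of $\mathcal{L}$ the surface $X_{n}$ acquires a cyclic quotient singularity whose analytic type depends only on the pair $(n,r)$; I would let $Y_{n}$ be the minimal resolution, which is a smooth projective surface.

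The next step is to compute the Chern numbers $c_{1}^{2}(Y_{n})$ and $c_{2}(Y_{n}) = e(Y_{n})$. Both turn out to be polynomial expressions in $n$ whose coefficients involve only $d$ and the numbers $t_{r}$. The strategy is to pass first to the blow-up $\widetilde{\mathbb{P}}^{2}$ of $\mathbb{P}^{2}$ at the $r$-fold points of $\mathcal{L}$ with $r \geq 2$, where the total transform of $\mathcal{L}$ becomes a simple normal crossing divisor, then apply the standard formulas for Chern classes of abelian covers along an SNC divisor, and finally subtract the contributions of the exceptional chains that resolve the cyclic quotient singularities on $X_{n}$.

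Once these formulas are available, I would specialize to $n = 5$, the value Hirzebruch identified as optimal for this purpose. The crucial input is that under the hypotheses $d \geq 4$ and $t_{d} = t_{d-1} = 0$ the arrangement is far enough from a pencil to guarantee that $K_{Y_{5}}$ is big and nef, so that the BMY inequality $c_{1}^{2}(Y_{5}) \leq 3\, c_{2}(Y_{5})$ applies. Indeed, $t_{d}=1$ would force $\mathcal{L}$ to be a pencil, while $t_{d-1}=1$ would give a near-pencil; in either case $Y_{5}$ would fail to be of general type and BMY would be unavailable. Substituting the Chern-number formulas, cancelling the common positive $n$-dependent factor, and collecting terms yields exactly
$$t_{2} + t_{3} \geq d + \sum_{r \geq 5}(r-4)t_{r}.$$

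The main obstacle will be the combinatorial bookkeeping inside the Chern-number computation: each exceptional curve arising from resolving an $(n,r)$-singularity contributes a specific correction to both $c_{1}^{2}$ and $c_{2}$, and one has to verify that after feeding these corrections into BMY the coefficient of $t_{r}$ lands at $0$ for $r=4$, is positive (in fact equal to $1$) for $r\in\{2,3\}$ on the left-hand side, and is exactly $-(r-4)$ for $r\geq 5$ on the right-hand side. A secondary technical point is the verification that $Y_{5}$ really is of general type, which is the only place where the assumption $t_{d} = t_{d-1} = 0$ enters in an essential way.
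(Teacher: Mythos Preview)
Your overall architecture matches the paper's, but the specialization you announce is wrong and would not produce the stated inequality. The correct exponent is $n=3$, not $n=5$. With the Hirzebruch polynomial
\[
H_{\mathcal{L}}(n)=\frac{3c_{2}(Y_{n})-c_{1}^{2}(Y_{n})}{n^{\,d-3}}
= n^{2}(f_{0}-d)+2n(d-f_{1}+f_{0})+2f_{1}+f_{0}-d-4t_{2},
\]
where $f_{0}=\sum_{r}t_{r}$ and $f_{1}=\sum_{r}rt_{r}$, the condition $H_{\mathcal{L}}(3)\geq 0$ simplifies to $4f_{0}-f_{1}-d-t_{2}\geq 0$, which is exactly $t_{2}+t_{3}\geq d+\sum_{r\geq 5}(r-4)t_{r}$. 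By contrast, $H_{\mathcal{L}}(5)\geq 0$ yields
\[
4t_{2}+3t_{3}+t_{4}\ \geq\ 4d+\sum_{r\geq 5}(2r-9)t_{r},
\]
a genuinely different inequality (cf.\ the paper's later remark on $n=5$). So your anticipated coefficient pattern ``$1$ for $r\in\{2,3\}$, $0$ for $r=4$, $-(r-4)$ for $r\geq 5$'' is attained at $n=3$, not at the value you chose.

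Two smaller points also deviate from the paper. First, the Kodaira-dimension check goes through for $d\geq 6$, $t_{d}=t_{d-1}=0$, and $n\geq 3$; the cases $d\in\{4,5\}$ are handled afterwards by a direct combinatorial check, not by BMY. Your assertion that $K_{Y_{5}}$ is big and nef already for $d\geq 4$ is therefore too optimistic. Second, $X_{n}$ is singular over an $r$-fold point only when $r\geq 3$, so in the paper one blows up the points of multiplicity $\geq 3$ (double points are already smooth on $X_{n}$); blowing up at all $r\geq 2$ as you propose is not what is done, though a suitably adjusted bookkeeping could still be made to work.
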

It might be surprising that Hirzebruch's inequality is only a by-product of his construction, the Hirzebruch-Kummer cover of the complex projective plane branched along an arrangement of lines, which allowed him to construct new examples of algebraic surfaces of general type, so-called ball-quotients. We are not going into technicalities related to ball-quotient surfaces, but for interested readers we refer to the following classical textbook \cite{BHH87}. On the other hand, it turned out that Hirzebruch's inequality is an extremely important tool in numerous problems in combinatorial geometry, for instance, as it was advertised in \cite{Solymosi}, Hirzebruch's inequality can be applied in the context of Sylvester-Gallai type theorems over the complex numbers.

Our scope in this survey is to present an accessible outline of Hirzebruch's paper and other strong Hirzebruch-type inequalities which allowed researches to make progress on classical conjectures in combinatorics, like the Weak Dirac Conjecture \cite[Section 6]{Klee}. We hope that the survey will be useful for these combinatorialists who want to use Hirzebruch’s ideas. 

Our prerequisites are not demanding, basics on differential geometry and first lectures on algebraic geometry.

We work over the complex numbers, and we will use the natural inclusion of $\mathbb{R} \subset \mathbb{C}$.

\section{On Hirzebruch's inequality for line arrangements}
\subsection{Basics on algebraic surfaces}
Before we present a sketch of the proof of Hirzebruch's inequality, we recall some basics on algebraic surfaces. By an algebraic surface we mean an irreducible and reduced $2$-dimensional complex projective variety -- such a surface can be embedded into $\mathbb{P}^{N}_{\mathbb{C}}$ for some $N \in \mathbb{Z}_{>0}$. In algebraic geometry one of the most important problems is to classify objects considering a certain fixed set of invariants. In order to provide some intuition, we begin with one-dimensional complex varieties, i.e., smooth algebraic curves. The basic invariant of an algebraic curve is its \emph{genus} $g$. This can be explained topologically when considering a complex algebraic curve as an oriented real surface. For example, a two-dimensional sphere is an algebraic curve of genus $0$. A torus (a donut-shaped surface) is an algebraic curve of genus $1$. A very important concept in algebraic geometry is that of divisors. A divisor on an algebraic curve $C$ is a finite sum of points in $C$ with integer coefficients, i.e., an expression of the form
$$D = \sum_{i=1}^{r} a_{i}P_{i},$$
where $a_{i} \in \mathbb{Z}$ and $P_{i} \in C$ for $i \in \{1, ...,r\}$. To any function $f$ (or more generally, a section of a line bundle) on $C$, one can associate its divisor, which intuitively can be thought of as \emph{zeroes} of $f$ minus \emph{poles} of $f$. Whereas the existence of line bundles on an algebraic variety $X$ is a subtle problem, there is always its tangent bundle $T_{X}$. If $C$ is a curve, then $T_{C}$ is a line bundle and the degree of a section in $T_{C}$ (i.e. of a vector field on $C$) is $2-2g(C)$, which leads to an algebraic point of view on the notion of the genus. For historical reasons, it is customary to study sections of the dual bundle $T_{C}^{*}$ which is the canonical bundle $K_{C}$ of $C$. Thus we have ${\rm deg} \, K_{C} = 2g(C) - 2$. Similarly, for an algebraic surface $X$ one can consider its tangent bundle $T_{X}$ (which is a vector bundle of rank $2$) and the canonical line bundle, which is the determinantal line bundle of its dual $K_{X} = {\rm det} \, T^{*}_{X}$. The canonical divisor, which by a slight abuse of notation we also denote by $K_{X}$, is the difference between zeros and poles of a section in the line bundle $K_{X}$. Formally, it is a sum of codimension $1$ subvarieties in $X$ (curves) with integer coefficients. Note that this is in full analogy to divisors on curves, where we have sums of codimension $1$ subvarieties -- points in this case. Curves on surfaces intersect in points, thus one can make sense of the self-intersection of $K_{X}$. This integer is denoted by $K_{X}^{2}$ and plays the role of the number ${\rm deg} \, K_{C}$ for curves. However, for surfaces $K_{X}^{2}$ alone does not provide enough information and one introduces another invariant, the topological Euler-Poincar\'e characteristic $e(X)$. In the case of curves, $e(C)$ is equal to ${\rm deg} \, K_{C}$. In the case of surfaces, these two natural generalizations of the degree of the canonical divisor differ and one studies the pair $(K_{X}^{2},e(X))$. The problem of establishing which pairs of integers $(m,n)$ may appear as $m=K_{X}^{2}$ and $n=e(X)$ is known as the geography problem, and it is not completely solved yet.
If the line bundle $K_{X}$ (or its tensor powers $K_{X}^{\otimes m}$, which we customarily write in the additive notation as $mK_{X}$) has many global sections, i.e., when $$ \dim H^{0}(X, mK_{X}) \sim c \cdot m^{2}$$
for a positive constant $c$, then $X$ is said to be of \emph{general type}. 
The most important constraint on the existence of algebraic surfaces of general type is the celebrated Bogomolov-Miyaoka-Yau inequality (see for instance \cite{M84,Wahl})
\begin{equation}
\label{BMY}
K_{X}^{2} \leq 3e(X).
\end{equation}
In fact, this inequality remains true the under milder assumption that the canonical divisor has asymptotically some sections, i.e., 
\begin{equation}
\label{eq:Kod}
{\rm dim} \, H^{0}(K, mK_{X}) > 0
\end{equation}
for some $m$ sufficiently large. We say that $X$ satisfying \eqref{eq:Kod} has non-negative Kodaira dimension. It is natural to wonder when in \eqref{BMY} there is equality. It turns out that there is an elegant topological answer to this question. Recall that a covering $p: Y \rightarrow X$ of a topological space $X$ is \emph{universal} if $Y$ is simply-connected. We have the following fundamental result, see for instance \cite{Miyaoka77}.
\begin{theorem}
Let $X$ be a smooth complex projective surface of general type. Then \eqref{BMY} holds with equality if and only if the universal cover of $X$ is the complex unit ball $\{(z_{1},z_{2}) \in \mathbb{C}^{2} : |z_{1}|^{2} + |z_{2}|^{2} < 1\}$, i.e., $X$ is a ball-quotient. 
\end{theorem}
The author suggests to consult \cite{Barth}, as a one of various possible sources, for a comprehensive introduction to the theory of algebraic surfaces.
\subsection{Hirzebruch's construction}
Covers of algebraic surfaces play an important role in Hirzebruch's approach. Here we briefly outline some basic properties of such covers. For more details we refer to \cite{Hironaka}.

\begin{definition}
A branched covering $\rho : X \rightarrow Y$ is a finite surjective morphism between normal varieties. Denote by $G$ the group of automorphisms $\alpha : X \rightarrow X$ so that $\rho(\alpha(x)) = \rho(x)$ for all $x \in X$. The group $G$ is called the group of covering automorphisms of $\rho$. If $G$ acts transitively on all fibers of our cover $\rho$, then the covering is called \emph{Galois}. We say that a branched covering $\rho : X \rightarrow Y$ is an abelian covering if $\rho: X \rightarrow Y$ is Galois and additionally the group of covering automorphisms is abelian.
\end{definition}
We also need to introduce the following notation (from Hirzebruch's papers), namely if $\mathcal{L} \subset \mathbb{P}^{2}_{\mathbb{C}}$ is an arrangement of lines, then
$$f_{0}=\sum_{r\geq 2}t_{r}, \quad f_{1} = \sum_{r\geq 2}rt_{r}.$$
Now we are ready to present the main result of this section. We will provide a detailed outline of the proof emphasizing a topological part of Hirzebruch's considerations. Our outline is still quite technical, and might be challenging, but we are doing this to emphasize the places where algebraic geometry methods are decisive and might be difficult to replace by combinatorial techniques.
\begin{theorem}
\label{thm:hir}
Let $\mathcal{L} = \{\ell_{1}, ... ,\ell_{d}\} \subset \mathbb{P}^{2}_{\mathbb{C}}$ be an arrangement of $d\geq 6$ lines such that $t_{d} = t_{d-1} = 0$. Then
\begin{equation}
\label{Hir}
t_{2} + t_{3} \geq d + \sum_{r \geq 5}(r-4)t_{r}.
\end{equation}
\end{theorem}
\begin{proof}
Here is the strategy. The key idea of Hirzebruch is to use abelian coverings of the complex projective plane branched along line arrangements. This idea leads to interesting algebraic surfaces for which the self-intersection of canonical divisor and the topological Euler-Poincar\'e characteristic can be expressed in terms of the combinatorics of a given arrangement. Under the conditions that we have at least $d\geq 6$ lines and $t_{d} = t_{d-1} = 0$, we can deduce that our newly constructed surface is of non-negative Kodaira dimension. Thus we can apply the Bogomolov-Miyaoka-Yau inequality \eqref{BMY}.

Starting from scratch, and following ideas from \cite[Section 4]{Catanese}, let us denote by $s_{i}$ %H^{0}(\mathbb{P}^{2}_{\mathbb{C}}, \mathcal{O}_{\mathbb{P}^{2}_{\mathbb{C}}}(1))$ 
the defining linear forms of $\ell_{i}$, i.e., $\ell_{i} = V(s_{i})$, for all $i \in \{1, ..., d\}$. Now we consider the following map 
$$f : \mathbb{P}^{2}_{{\mathbb{C}}} \ni x \mapsto (s_{1}(x): ... : s_{d}(x)) \in \mathbb{P}^{d-1}_{\mathbb{C}}.$$
Let us emphasize that $f$ is well-defined since, by the assumption there is no point where all
lines meet, at every point at least one of the $s_{j}(x)$'s is non-zero.
Now we use the Kummer covering
$${\rm Km}_{n}: \mathbb{P}^{d-1}_{\mathbb{C}} \ni (y_{1}: ... :y_{d}) \mapsto (y_{1}^{n}: ... : y_{d}^{n}) \in \mathbb{P}^{d-1}_{\mathbb{C}},$$
where $n\geq 2$ is called the exponent of ${\rm Km}_{n}$. One can show that this covering is of degree $n^{d-1}$ with the Galois group $(\mathbb{Z}/n\mathbb{Z})^{d-1}$. Obviously, it is branched along $y_{1} \cdot ... \cdot y_{d} = 0$. Our main object of interest is the following fiber product:
\begin{equation}
\label{XN}
X_{n} := \mathbb{P}^{2}_{\mathbb{C}} \times_{\mathbb{P}^{d-1}_{\mathbb{C}}} \mathbb{P}^{d-1}_{\mathbb{C}} = \{(x,y) \in \mathbb{P}^{2}_{\mathbb{C}} \times \mathbb{P}^{d-1}_{\mathbb{C}} : f(x) = {\rm Km}_{n}(y)\}.
\end{equation}
There exists a projective transformation on $\mathbb{P}^{2}_{\mathbb{C}}$ such that $\ell_{1} = \{x_{1}=0\}$, $\ell_{2} = \{x_{2} = 0\}$, and $\ell_{3} = \{x_{3} = 0 \}$. Then we can describe $X_{n}$ even more explicitly looking at this surface as embedded in $\mathbb{P}^{d-1}$ (the second factor in the fibre product above). Indeed, $X_{n}$ is given by equations
$$
X_{n} = \{ (y_{1}: ... :y_{d})\in \mathbb{P}^{d-1}_{\mathbb{C}} : y_{j}^{n} = s_{j}(y_{1}^{n}, y_{2}^{n}, y_{3}^{n}) \,\, {\rm for} \,\, j \in \{4, ... ,d\} \}.
$$
In this explicit description, our surface $X_{n}$ is given by $(d-3)$-homogeneous equations in $\mathbb{P}^{d-1}_{\mathbb{C}}$, which means that $X_{n}$ is a {\it complete intersection}. One can show (using a local argument) that $X_{n}$ is singular over a point $p$ of the arrangement $\mathcal{L}$ if and only if $p$ is a point of multiplicity $\geq 3$, so that, unless $\mathcal{L}$ has only double points, $X_{n}$ is singular. We need to pass to its \emph{desingularization} (a smooth surface $Y_{n}$ and a generically one-to-one morphism $\tau: Y_{n} \rightarrow X_{n}$) in order to apply inequality \eqref{BMY}. Since $Y_{n}$ is a smooth complex projective surface, we can compute the following numbers (all relevant computations can be found in \cite[pp. 123-125]{Hirzebruch}): 
$$K^{2}_{Y_{n}}/n^{d-3}= n^{2}(9-5d + 3f_{1} - 4f_{0}) + 4n(d-f_{1}+f_{0})+f_{1}-f_{0}+d+t_{2},$$
$$e(Y_{n})/n^{d-3} = n^{2}(3-2d+f_{1}-f_{0}) + 2n(d-f_{1}+f_{0}) + f_{1}-t_{2}.$$
In the next step, quite cumbersome, one needs to check under which conditions on the incidence distribution of $\mathcal{L}$ our surface has non-negative Kodaira dimension -- it turns out that it is enough to assume that $d\geq 6$, $t_{d}=t_{d-1} = 0$, and $n \geq 3$. Then the Bogomolov-Miyaoka-Yau inequality $K^{2}_{Y_{n}} \leq 3e(Y_{n})$ implies that the following Hirzebruch polynomial
$$H_{\mathcal{L}}(n) = \frac{3e(Y_{n}) - K^{2}_{Y_{n}}}{n^{d-3}} = n^{2}(f_{0}-d) + 2n(d-f_{1}+f_{0}) + 2f_{1}+f_{0}-d-4t_{2}$$
is non-negative for $n\geq 3$. Evaluating $H_{\mathcal{L}}$ at $n=3$ we get 
$$t_{2}+t_{3} \geq d +\sum_{r\geq 5}(r-4)t_{r},$$
which is exactly Hirzebruch's inequality.
\end{proof}
\begin{remark}
Hirzebruch's inequality implies that every configuration of $d\geq 6$ lines with $t_{d}=t_{d-1}=0$ contains double or triple points as the intersections. 
\end{remark}
\begin{remark}
It is natural to ask whether Hirzebruch's inequality is sharp, i.e., whether there exists a line arrangement $\mathcal{A}$ such that $t_{2}+t_{3} = d + \sum_{r\geq 5}(r-4)t_{r}$. There exists exactly one (!) arrangement of lines satisfying the above equality, namely the Hesse arrangement of lines. This arrangement consists of $d=12$ lines having $t_{2}=12$ and $t_{4}=9$. The proof of this quite surprising result is not elementary (in its full generality), one needs to use the theory of totally geodesic curves in complex compact ball-quotients \cite{BHH87}. In the case when we restrict our attention to real line arrangements, we refer to \cite{BokowskiP} for an elementary proof of the fact that there are no such arrangements.
\end{remark}
\begin{remark}
In the same paper \cite{Hirzebruch}, Hirzebruch defines the so-called characteristic numbers of line arrangements, namely
$$\gamma(\mathcal{L}) = \lim_{n \rightarrow \infty} \frac{K^2_{Y_{n}}}{e(Y_{n})} = \frac{9-5d + 3f_{1}-4f_{0}}{3-2d+f_{1}-f_{0}}.$$
Somesse \cite{Som} proved that for complex line arrangements, 
$$\gamma(\mathcal{L}) \leq \frac{8}{3},$$
with equality if and only if $\mathcal{L}$ is the dual-Hesse arrangement of lines. This result, in particular, implies that if $\mathcal{L}$ is an arrangement of $d\geq 6$ lines with $t_{d}=t_{d-1}=0$, then
$$2t_{2} + t_{3} \geq 3 + d + \sum_{r\geq 5}(r-4)t_{r}.$$
Observe that $\gamma(\mathcal{L}) = 3$ implies that $f_{0}=d$, and by the Erd\H{o}s-de Bruijn Theorem \cite{deBr} this condition forces $\mathcal{L}$ to be a near-pencil, i.e., an arrangement of $d$ lines such that $t_{d-1}=1$ and $t_{2}=d-1$. Note that for a near-pencil, the above inequality also holds:
$$2d-2 = 2t_{2} + t_{3} \geq 3 + d + \sum_{r\geq 5}(r-4)t_{r} = 2d-2.$$
\end{remark}
\begin{remark}
Hirzebruch's construction provides a whole series of inequalities depending on $n\geq 3$. In particular, evaluating $H_{\mathcal{L}}$ at $n=5$ we obtain
$$4t_{2}+3t_{3}+t_{4} \geq 4d + \sum_{r\geq 5}(2r-9)t_{r}.$$
It is natural to ask whether this inequality is sharp, and it turns out that there exists exactly one real line arrangement providing equality, the well-known $\mathcal{A}_{1}(6)$ configuration consisting of $d=6$ lines and $t_{3}=4$, $t_{2}=3$. For a combinatorial proof of this statement we refer to \cite{BokowskiP}. Moreover, one can show that there is exactly one line arrangement defined over the complex numbers providing equality, the dual-Hesse arrangement of $9$ lines and $12$ triple points.
\end{remark}
\begin{remark}
Using finer considerations on the Kodaira dimension of $Y_{n}$, we can show that if $d\geq 6$ with $t_{d}=t_{d-1}=t_{d-2}=0$ and $n\geq 2$, then our surface $Y_{n}$ has non-negative Kodaira dimension \cite[Kapitel 3] {BHH87}. The condition $H_{\mathcal{L}}(2)\geq 0$ leads us to
$$t_{2}+3t_{3}+t_{4} \geq d + \sum_{r\geq 5}(2r-9)t_{r}.$$
\end{remark}
\begin{remark}
In the literature, we can find usually the following variant of Hirzebruch's inequality
\begin{equation}
\label{Hirz2}
t_{2} + \frac{3}{4} t_{3} \geq d + \sum_{r \geq 5}(2r-9)t_{r}
\end{equation}
provided that $d\geq 6$ and $t_{d}=t_{d-1}=t_{d-2}=0$. In order to justify this claim, one needs to use Miyaoka-Sakai's improvement \cite{Hirzebruch1,M84,Sakai} of the Bogomolov-Miyaoka-Yau inequality which says that if $Y_{n}$ contains either smooth rational curves (genus = $0$) or smooth elliptic curves (genus = $1$), then one always has $3c_{2}(Y_{n}) - c_{1}^{2}(Y_{n}) \geq  const > 0$, and the number $const$ can be explicitly determined -- in fact it is given in geometric terms. This leads to the desired inequality.
\end{remark}
\begin{remark}
In \emph{Extremal problems in combinatorial geometry} by Erd\H{o}s and Purdy \cite{EP}, Section 4.1.1 is devoted to Hirzebruch's inequalities. Erd\H{o}s asks here whether one can provide elementary and independent proofs of Hirzebruch's inequalities provided that we restrict our attention to $\mathbb{P}^{2}_{\mathbb{R}}$.
%Purdy asks what Hirzebruch's method would give if it were assumed that $t_{d} = t_{d-1} = ... = t_{d-r} = 0$ for some $r \geq 2$. 
Moreover, in \textit{Research Problems in Discrete Geometry} by Brass, Moser, and Pach \cite[p.~315; Problem 7]{Research} one of the stated research problems is to prove Hirzebruch's inequality (\ref{Hirz2}) using only elementary methods. In the light of what we have seen so far, this seems to be extremely difficult. The main ingredient of Hirzebruch's construction is the Bogomolov-Miyaoka-Yau inequality which is not combinatorial in its nature. The next section presents even stronger inequalities involving the
number of lines and intersection points that also follow from variants of the Bogomolov-Miyaoka-Yau inequality. At this stage, at least to the author, it seems that there is no hope to find an easy proof of (\ref{Hirz2}).
\end{remark}

\begin{remark}
It is easy to observe that every configuration of $d \in \{4,5\}$ lines with $t_{d}=t_{d-1}=0$ also satisfies Hirzebruch's inequality (\ref{Hir}), therefore the formulation of Theorem \ref{17} is in fact equivalent to that of Theorem \ref{thm:hir}.
\end{remark}
%\begin{remark}
%Topologically, our branched covering $\rho: X_{n} \rightarrow \mathbb{P}^{2}_{\mathbb{C}}$ is determined by the following canonically associated map (see \cite[pp. 8-9]{Hironaka} for details):
%$$\phi: H_{1}(\mathbb{P}^{2}_{\mathbb{C}} \setminus \mathcal{L}, \mathbb{Z}) \rightarrow H_{1}(\mathbb{P}^{2}_{\mathbb{C}} \setminus \mathcal{L}, \mathbb{Z} / n \mathbb{Z}).$$
%\end{remark}

Before we pass to (stronger) Hirzebruch-type inequalities, let us present an interesting way to construct K3 surfaces using abelian covers branched along $6$ general lines.
\begin{definition}
A smooth complex surface $X$ is called a K3 surface if it is a simply-connected compact complex manifold of dimension $2$ such that the canonical divisor is trivial. In particular, its Kodaira dimension is zero and $K_{X}^{2}=0$.
\end{definition}
\begin{example}
Consider $\mathcal{L} = \{\ell_{1}, ..., \ell_{6}\} \subset \mathbb{P}^{2}_{\mathbb{C}}$ an arrangement of $6$ generic lines which means that the only intersection points of these lines are double points. We can find a projective transformation such that $\ell_{1} = \{x=0\}, \ell_{2}=\{y=0\}$, and $\ell_{3}=\{z=0\}$. We denote by $\ell_{i}=a_{i}x + b_{i}y+c_{i}z$ with $i \in \{4,5,6\}$ the equations of remaining $3$ lines. Now we can consider the Hirzebruch-Kummer cover $X_{2}$ with exponent $n=2$ branched along $\ell_{1}, ..., \ell_{6}$. We know that $X_{2}$ is a smooth projective surface and it can be described as
$$X_{2} = \{ (z_{1},z_{2},z_{3},z_{4},z_{5},z_{6}) \in \mathbb{P}^{5}_{\mathbb{C}} : z^{2}_{i} = a_{i}z_{1}^{2}+b_{i}z_{2}^{2}+c_{i}z_{3}^{2}, i \in \{4,5,6\} \},$$
so our surface $X_{2}$ is a smooth complete intersection of $3$ quadrics in $\mathbb{P}^{5}_{\mathbb{C}}$. This surface is well-known in algebraic geometry, i.e., $X_{2}$ is a K3 surface of degree $8$. Let us conclude this remark by the following algebraic connection between $X_{2}$ and two-to-one covering of the complex projective plane branched along $\ell_{1}, ...,\ell_{6}$ -- it turns out that $X_{2}$ is the minimal desingularization of this covering, please consult \cite[p. 770]{GH} for details.
\end{example}

\section{Stronger Hirzebruch-type inequalities for complex line arrangements}
Now we present (stronger) Hirzebruch-type inequalities for line arrangements in the complex projective plane. These results follow from Langer's version of the orbifold Miyaoka-Yau inequality for normal surfaces with boundary divisors. Since Langer's result is highly non-trivial (it involves, for instance, the notion of orbifold Euler numbers, and other technical considerations), we do not provide details -- motivated readers can consult \cite{Langer}.

Let us start with the first strong Hirzebruch's type inequality, which was first proved by Bojanowski \cite{Bojanowski} in his Master Thesis (in Polish).  
\begin{theorem}
\label{Bojan}
Let $\mathcal{L}=\{\ell_{1}, ...,\ell_{d}\} \subset \mathbb{P}^{2}_{\mathbb{C}}$ be a line arrangement with $d\geq 6$ such that $t_{r} = 0$ for $r >\frac{2d}{3}$. Then 
\begin{equation}
\label{langorb}
t_{2} + \frac{3}{4}t_{3} \geq d + \sum_{r\geq 5}\bigg( \frac{r^{2}}{4}-r\bigg)t_{r}.
\end{equation}
\end{theorem}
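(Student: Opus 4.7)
The plan is to apply A.~Langer's orbifold refinement of the Bogomolov--Miyaoka--Yau inequality directly to the pair $(\mathbb{P}^{2}_{\mathbb{C}}, \mathcal{L})$, rather than passing through an abelian cover as in Hirzebruch's original argument. Instead of computing Chern numbers of a smooth cover $Y_{n}$, one equips $(\mathbb{P}^{2}_{\mathbb{C}}, \mathcal{L})$ directly with a suitable orbifold structure along the lines and at the singular points, and then compares its orbifold Chern numbers $\bar{c}_{1}^{2}$ and $\bar{c}_{2}$. The shape of (\ref{langorb})---in particular the coefficient $3/4$ on $t_{3}$, the expression $r(r-4)/4$ on $t_{r}$ for $r \geq 5$, and the absence of $t_{4}$---is the combinatorial signature of placing orbifold weight $1/2$ on each line, so that the effective log canonical divisor is $K_{\mathbb{P}^{2}} + \tfrac{1}{2}\mathcal{L}$ and the local orbifold Euler contribution at an $r$-fold point is a quadratic in $r$ vanishing at $r = 4$.

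First I would write down the two orbifold Chern numbers of this pair. The top self-intersection is immediate: $(K_{\mathbb{P}^{2}} + \tfrac{1}{2}\mathcal{L})^{2} = (\tfrac{d}{2} - 3)^{2}$. The orbifold Euler number $\bar{c}_{2}$ decomposes as the topological Euler characteristic $e(\mathbb{P}^{2}_{\mathbb{C}} \setminus \mathcal{L}) = 3 - 2d + f_{1} - f_{0}$ of the arrangement complement, plus explicit local orbifold contributions at each $r$-fold point depending on $r$ and on the chosen weight, plus linear contributions along the lines. Here $f_{0} = \sum_{r \geq 2} t_{r}$ and $f_{1} = \sum_{r \geq 2} r\, t_{r}$ are the standard combinatorial invariants. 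Organizing the local orbifold Euler numbers by the multiplicity $r$ and substituting Langer's formulas is the combinatorial heart of the computation.

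The main obstacle I anticipate is verifying the hypotheses of Langer's theorem, namely that on a log resolution $\pi : Y \to \mathbb{P}^{2}_{\mathbb{C}}$ the pull-back of $K_{\mathbb{P}^{2}} + \tfrac{1}{2}\mathcal{L}$ is nef and big; this is the orbifold analogue of the non-negative-Kodaira-dimension check in Hirzebruch's original proof. Nefness must be tested against the strict transform of every line and against every exceptional divisor, and it is here that the combinatorial assumption $t_{r} = 0$ for $r > 2d/3$ enters as the sharp condition guaranteeing $(K + \tfrac{1}{2}\mathcal{L}) \cdot \widetilde{\ell_{i}} \geq 0$ for every line $\ell_{i}$. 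The threshold $2d/3$ arises precisely because an $r$-fold point on $\ell_{i}$ contributes a controlled negative amount to this intersection, and the sum of such contributions along $\ell_{i}$ remains nonnegative if and only if no $r$ exceeding $2d/3$ occurs.

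Once positivity is in hand, Langer's inequality $\bar{c}_{1}^{2} \leq 3\,\bar{c}_{2}$ becomes an explicit polynomial inequality in $d$ and the $t_{r}$'s, and after collecting terms (\ref{langorb}) should drop out directly. A useful internal sanity check along the way is that the resulting coefficient on $t_{4}$ vanishes, since $r(r-4)/4 = 0$ at $r = 4$; any arithmetic error in the local orbifold Euler number computation would spoil this cancellation and be easy to detect.
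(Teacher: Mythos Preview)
Your strategy is the one the paper itself points to: it does not give a self-contained argument for this theorem but simply says that it ``can be deduced from \cite[Theorem~2.2]{Pokora2} with $d=1$'' (i.e.\ Langer's orbifold Miyaoka--Yau inequality applied to the pair $(\mathbb{P}^{2}_{\mathbb{C}},\tfrac{1}{2}\mathcal{L})$, exactly your plan) and that it ``also follows from'' Langer's two explicit inequalities $\sum r^{2}t_{r}\geq\lceil 4d^{2}/3\rceil$ and $\sum r\,t_{r}\geq\lceil d^{2}/3+d\rceil$. So at the level of approach you are aligned with the paper.

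One point in your sketch deserves correction. You attribute the hypothesis $t_{r}=0$ for $r>2d/3$ to the nefness test of $\pi^{*}(K_{\mathbb{P}^{2}}+\tfrac{1}{2}\mathcal{L})$ against the strict transforms $\widetilde{\ell_{i}}$ on a log resolution. But by the projection formula
\[
\pi^{*}\bigl(K_{\mathbb{P}^{2}}+\tfrac{1}{2}\mathcal{L}\bigr)\cdot\widetilde{\ell_{i}}
=\bigl(K_{\mathbb{P}^{2}}+\tfrac{1}{2}\mathcal{L}\bigr)\cdot\ell_{i}=\tfrac{d}{2}-3,
\]
which is nonnegative for $d\geq 6$ regardless of the multiplicities; the threshold $2d/3$ does not appear here. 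In Langer's framework that bound enters instead through the local orbifold Euler numbers (equivalently, the log canonical threshold $2/r$ at an $r$-fold point) and the optimisation over the weight. Concretely, Bojanowski's inequality is \emph{equivalent} to Langer's second inequality $\sum_{r\geq 2} r\,t_{r}\geq \tfrac{d^{2}+3d}{3}$ once one substitutes the combinatorial identity $\sum_{r\geq 2} r(r-1)t_{r}=d(d-1)$, and it is in Langer's proof of that inequality that the condition $t_{r}=0$ for $r>2d/3$ is genuinely used. Your identification of the weight $1/2$ as the source of the coefficients $1,\ \tfrac{3}{4},\ 0,\ \tfrac{r^{2}}{4}-r$ is correct; just relocate the role of the $2d/3$ hypothesis accordingly.
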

One proof of this result can be deduced from \cite[Theorem 2.2]{Pokora2} with $d=1$. It also follows from the following two inequalities for complex line arrangements due to Langer \cite[Proposition 11.3.1]{Langer}.

\begin{theorem}
\label{lang}
Let $\mathcal{L} = \{\ell_{1}, ..., \ell_{d}\}\subset\mathbb{P}^{2}_{\mathbb{C}}$ be a line arrangement such that $t_{r}=0$ for $r >\frac{2d}{3}$. Then 
$$\sum_{r\geq 2}r^{2}t_{r}  \geq \bigg\lceil \frac{4d^{2}}{3} \bigg\rceil,$$
$$\sum_{r\geq 2}rt_{r} \geq \bigg\lceil \frac{d^{2}}{3} + d \bigg\rceil.$$
\end{theorem}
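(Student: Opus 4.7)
My plan is to deduce both inequalities from Bojanowski's inequality (Theorem~\ref{Bojan}) by combining it with the elementary counting identity
\[
\sum_{r \geq 2} \binom{r}{2} t_{r} \; = \; \binom{d}{2}, \qquad \text{equivalently} \qquad \sum_{r \geq 2} r^{2} t_{r} \; = \; \sum_{r \geq 2} r\, t_{r} + d(d-1),
\]
which counts in two ways the pairs of lines meeting at each intersection point. This identity shows at once that the two stated inequalities are equivalent to each other: the bound $\sum r^{2} t_{r} \geq \lceil 4d^{2}/3 \rceil$ translates directly into $\sum r\, t_{r} \geq \lceil d^{2}/3 + d \rceil$ by subtracting $d(d-1)$ on both sides (and both sums are nonnegative integers, so passing to the ceiling is automatic once the real-valued inequality is known). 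It therefore suffices to prove one of the two, say the second.

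To produce it, I would multiply the Bojanowski inequality by $4$ and transpose the $r \geq 5$ terms to the left, obtaining
\[
4 t_{2} + 3 t_{3} + \sum_{r \geq 5}(4r - r^{2}) t_{r} \;\geq\; 4d.
\]
Now add the counting identity in the form $\sum_{r \geq 2} r(r-1) t_{r} = d(d-1)$. The right-hand side becomes $4d + d(d-1) = d^{2} + 3d$, and I expect the coefficient of each $t_{r}$ on the left to collapse to exactly $3r$: for $r \geq 5$ this is $(4r - r^{2}) + r(r-1) = 3r$, while the small cases $r = 2, 3, 4$ give $4+2, 3+6, 0+12$, i.e.\ $6, 9, 12$ respectively. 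Hence the whole left-hand side equals $3\sum_{r \geq 2} r\, t_{r}$, and dividing by $3$ yields $\sum r\, t_{r} \geq d^{2}/3 + d$. Taking the ceiling gives the stated form, and the first inequality then follows immediately from the combinatorial identity above.

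Under this plan the real content of the theorem is entirely inside Theorem~\ref{Bojan}, which rests in turn on Langer's orbifold Miyaoka--Yau inequality; once Bojanowski's bound is granted, the derivation is a one-line algebraic manipulation. The main obstacle is therefore not in the present proof but in Theorem~\ref{Bojan} itself, which the paper imports as a black box from \cite{Langer}. A minor bookkeeping issue is that Bojanowski's statement is given for $d \geq 6$, so arrangements with $d \leq 5$ and $t_{r} = 0$ for $r > 2d/3$ would in principle have to be inspected separately; however, such arrangements are very close to generic and can be checked by hand.
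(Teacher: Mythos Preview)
Your derivation is correct. The paper itself does not prove this theorem: it simply cites \cite[Proposition~11.3.1]{Langer} and then remarks that Theorem~\ref{Bojan} (Bojanowski) \emph{follows from} these two Langer inequalities. You run the implication in the opposite direction, and in fact your computation shows more: modulo the combinatorial identity $\sum_{r\geq 2} r(r-1)t_{r}=d(d-1)$, Bojanowski's inequality and the second Langer inequality are literally the same statement, and the first Langer inequality is then equivalent to the second via $\sum r^{2}t_{r}=\sum r\,t_{r}+d(d-1)$. So what you have really established is that Theorem~\ref{Bojan} and the present theorem are equivalent, which is a cleaner way to package the relationship than the one-directional remark in the paper.

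One caution about logical dependence: in the paper's narrative, one of the two offered justifications of Theorem~\ref{Bojan} is precisely ``it follows from Langer's inequalities,'' so invoking Theorem~\ref{Bojan} to prove Langer's inequalities would be circular along that route. You avoid the circle because there is an independent proof of Bojanowski (via \cite[Theorem~2.2]{Pokora2}, or Bojanowski's thesis itself), and ultimately both rest on Langer's orbifold Miyaoka--Yau inequality, as you note. The residual $d\leq 5$ check you flag is indeed harmless: the hypothesis $t_{r}=0$ for $r>2d/3$ forces only double (and for $d=5$ possibly triple) points, and the inequality is immediate.
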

It is natural to compare Bojanowski's version of Hirzebruch's inequality with others, and we can easily observe the following chain of inequalities (under the assumption that $t_{r}=0$ for $r> \frac{2d}{3}$):

$$t_{2} + t_{3} \geq t_{2} + \frac{3}{4}t_{3} \geq d + \sum_{r\geq 5} \bigg( \frac{r^{2}}{4}-r\bigg)t_{r} \geq d + \sum_{r\geq 5}(2r-9)t_{r} \geq d + \sum_{r\geq 5}(r-4)t_{r}.$$

Let us now list examples of line arrangements\footnote{If you would like to learn more about these arrangements and the geometry lurking behind them, we refer to \cite{BHH87,IgorD} for details.} for which we obtain equality in (\ref{langorb}) -- our list is probably far from complete.

\begin{enumerate}
\item \emph{Icosahedron arrangement} consisting of $15$ lines and $t_{2} = 15, t_{3} = 10, t_{5}=6$.
\item \emph{Ceva's arrangements} consisting of $3n$ lines ($n\geq 4$), and $t_{3} = n^{2}, t_{n}=3$.
\item \emph{The extended Ceva's arrangements} consisting of $3n+3$ lines with $n\geq 3$, and $t_{2} = 3n$, $t_{3}=n^{2}$, $t_{n+2}=3$.
\item \emph{The Hesse arrangement} consisting of $12$ lines and $t_{4}=9, t_{2} = 12$.
\item The union of Ceva's arrangement of $9$ lines and the Hesse arrangement consisting of $d=12+9$ lines with $t_{2}=36$, $t_{4}=9$, $t_{5}=12$.
\item \emph{Klein's arrangement} consisting of $21$ lines and $t_{3}=28, t_{4}=21$.
\item \emph{Wiman's arrangement} consisting of $45$ lines and $t_{3}=120, t_{4}=45, t_{5}=36$.
\end{enumerate}

There exists an infinite series of line arrangements such that equality in (\ref{langorb}) holds -- for instance Ceva's line arrangements. Moreover, note that there exists an interesting incidence distribution $\mathcal{C}$ constructed in \cite[p.~116]{BHH87}. It consists of $d=12m+3$ lines and $t_{2} = 12m^{2}+15m+3$, $t_{6}=4m^{2} + m$ with $m \in \mathbb{Z}_{\geq 3}$. It can be shown that this incidence distribution cannot be realized over the real numbers (i.e., there does not exist any line arrangement defined over the real numbers possessing the mentioned distribution). This leads to the first open problem of this survey.
\begin{problem}
Is it possible to construct arrangements of $d=12m+3$ lines in the complex projective plane such that  $t_{2}=12m^{2}+15m+3$, $t_{6}=4m^{2}+m$ with $m\in \mathbb{Z}_{\geq 3}$?
\end{problem}
Simple calculations reveal that the distribution $\mathcal{C}$ satisfies $(\ref{langorb})$ with equality and if one can show that there exists $m_{0}\in \mathbb{Z}_{\geq 3}$ for which we can realize $\mathcal{C}$ over the complex numbers, then $\mathcal{C}$ leads to a new example of complex and compact $2$-dimensional ball-quotient (in fact this is the main reason why this problem is really attractive).

Now we are in a good position to present (probably) the strongest known Hirzebruch-type inequality for complex line arrangements. The inequality in question is the main result of Bojanowski's thesis \cite[Theorem 2.3]{Bojanowski}.
\begin{theorem}
Let $\mathcal{L} \subset \mathbb{P}^{2}_{\mathbb{C}}$ be an arrangement of $d$ lines. Pick a natural number $n \in [3, ..., d)$ and assume that $t_{r} = 0$ for $r \geq d-n+2$. Then
$$t_{2}+\frac{3}{4}t_{3} \geq d + \sum_{r=5}^{s-1}\bigg(\frac{r^{2}}{4}-r\bigg)t_{r} + \sum_{r=s}^{d-n}((n-1)r - n^{2})t_{r} + \bigg((n-2)(d-n+1) - (n-1)^{2}\bigg)t_{d-n+1},$$
where $s = \min \{2n, d-n\}$.
\end{theorem}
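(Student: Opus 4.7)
The plan is to combine the Hirzebruch-Kummer covering construction at a chosen exponent $n$ with Langer's orbifold version of the Bogomolov-Miyaoka-Yau inequality. This interpolates between Hirzebruch's original bound and Bojanowski's refined inequality (\ref{langorb}): as the multiplicity $r$ of an intersection point varies across the ranges determined by $s=\min\{2n,d-n\}$, different local weightings of the log pair become optimal, and this is what produces the three distinct coefficients in the statement.

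Concretely, first I would blow up $\sigma\colon Y\to\mathbb{P}^{2}_{\mathbb{C}}$ at all singular points of $\mathcal{L}$ of multiplicity $\geq 3$, and put a $\mathbb{Q}$-boundary
$$D_{n} = \bigl(1-\tfrac{1}{n}\bigr)\sum_{i=1}^{d}\widetilde{\ell}_{i} + \sum_{p}\beta_{p}E_{p}$$
on $Y$, where each $\beta_{p}\in[0,1)$ is tuned to the point $p$ so that $(Y, D_{n})$ remains log canonical. Next I would compute the orbifold Chern numbers $\bar{c}_{1}^{2}(Y,D_{n})$ and $\bar{c}_{2}(Y,D_{n})$ along the lines of \cite{Langer}: these split into a global part depending only on $d$ and $n$, plus a local part at each multiple point whose contribution is an explicit quadratic in $r$ and $n$. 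The coefficient $\tfrac{r^{2}}{4}-r$ in the first sum is the local contribution when the Bojanowski-type weighting is sharpest (small $r$); the coefficient $(n-1)r-n^{2}$ appears where the $n$-refined Hirzebruch weighting becomes sharper (intermediate $r$); and the coefficient $(n-2)r-(n-1)^{2}$ appears in the top range, where one must further relax $\beta_{p}$ to preserve log canonicity. Comparing these three local regimes is exactly what forces the cutoff $s=\min\{2n,d-n\}$.

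Finally, the hypothesis $t_{r}=0$ for $r>d-n+1$ plays the role analogous to $t_{d-1}=t_{d}=0$ in Hirzebruch's original theorem: it is the condition that ensures $K_{Y}+D_{n}$ is pseudoeffective, so that Langer's inequality $\bar{c}_{1}^{2}(Y,D_{n})\leq 3\bar{c}_{2}(Y,D_{n})$ can be applied; rearranging the resulting inequality then yields the stated bound. The main obstacle, which I would not grind through here, is the local orbifold Chern-number calculation at a point of multiplicity $r$ together with the case analysis identifying the optimal weight in each multiplicity regime. This rests squarely on Langer's orbifold Euler number formalism, so in a detailed write-up I would defer the local computations and the verification of pseudoeffectivity to \cite{Langer} and \cite{Bojanowski}.
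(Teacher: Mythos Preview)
The paper does not actually prove this theorem: it merely states the result and attributes it to Bojanowski's Master Thesis \cite[Theorem 2.3]{Bojanowski}, so there is no proof in the paper against which to compare your attempt.

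That said, your outline is consonant with the methodology the paper describes for the weaker inequality (\ref{langorb}): the surrounding discussion makes clear that all of Bojanowski's results rest on Langer's orbifold Miyaoka-Yau inequality \cite{Langer}, and your sketch---blow up the high-multiplicity points, put a $\mathbb{Q}$-boundary with coefficients tuned to the multiplicity, compute orbifold Chern numbers, and apply $\bar{c}_{1}^{2}\leq 3\bar{c}_{2}$ once pseudoeffectivity is secured---is the expected shape of such an argument. Your explanation of the three coefficient regimes and of the role of the hypothesis $t_{r}=0$ for $r>d-n+1$ is plausible, though you have not carried out the local orbifold Euler-number computations that actually pin down the coefficients $\tfrac{r^{2}}{4}-r$, $(n-1)r-n^{2}$, and $(n-2)r-(n-1)^{2}$, nor verified the cutoff $s=\min\{2n,d-n\}$; as you acknowledge, those are the substantive steps and would have to be checked against \cite{Bojanowski} or redone from \cite{Langer}. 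In short: your sketch is a reasonable roadmap, but since the paper itself defers the proof entirely to the cited thesis, there is nothing further to compare.
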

\section{Applications}
In this section, we focus on applications of Hirzebruch-type inequalities in the context of interesting combinatorial problems in incidence point-line theory. We present only three aspects in order to avoid repetitions. For more applications of Langer's inequalities, and in some sense Hirzebruch-type inequalities, we refer for instance to a recent paper by Frank de Zeeuw  \cite{Frank}.
\subsection{The Weak Dirac Conjecture}
Let us denote by $\mathcal{P}\subset \mathbb{P}^{2}_{\mathbb{C}}$ a finite set of $n$ mutually distinct points and let $\mathcal{L}(\mathcal{P})$ be the set of lines determined by $\mathcal{P}$, where a line that passes through at least two points from $\mathcal{P}$ is said to be determined by $\mathcal{P}$. We denote by $l_{r}$ the number of $r$-rich lines determined by exactly $r$ points from $\mathcal{P}$.

As a starting point for our discussion we recall the original Dirac conjecture \cite{Dirac}. Note that Dirac never conjectured this in print, although he states twice in \cite{Dirac} that its truth is \emph{likely}.
\begin{conjecture}[Dirac]
Every set $\mathcal{P}$ of $n$ non-collinear points contains a point in at least $\frac{n}{2}$ lines determined by $\mathcal{P}$.
\end{conjecture}
It turned out that the Dirac conjecture is false -- the smallest counterexample has $n=7$ points, namely the vertices of a triangle together with the midpoints of its sides and its centroid. However, the conjecture was resolved positively by Green and Tao in \cite{GT} for very large $n$. In this view, we can formulate the actual Dirac conjecture which is, according to our best knowledge, open.
\begin{conjecture}
There is a constant $c$ such that every set $\mathcal{P}$ of $n$ non-collinear points contains a point in at least $\frac{n}{2}-c$ lines determined by $\mathcal{P}$.
\end{conjecture}

In 1961, P. Erd\H{o}s proposed the following Weak Dirac Conjecture \cite{probE}.
\begin{conjecture}[WDC]
Every set $\mathcal{P}$ of $n$ non-collinear points in the plane (presumably over the real numbers) contains a point which is incident to at least $\lceil \frac{n}{c} \rceil$ lines from $\mathcal{L}(\mathcal{P})$ for some constant $c > 0$. 
\end{conjecture}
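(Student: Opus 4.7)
The plan is to establish the Weak Dirac Conjecture with $c = 3$ by passing to the dual line arrangement and applying Langer's strong Hirzebruch-type inequality from Section~3. First, I would replace each point $p_{i} \in \mathcal{P}$ with its dual line $\ell_{i}$, producing an arrangement $\mathcal{A} = \{\ell_{1}, \ldots, \ell_{n}\} \subset \mathbb{P}^{2}_{\mathbb{C}}$ of $n$ lines. Projective duality swaps incidences, so every $r$-fold intersection point of $\mathcal{A}$ corresponds to a line in the original plane through exactly $r$ points of $\mathcal{P}$, and the number of intersection points lying on $\ell_{i}$ equals the number $m_{i}$ of lines of $\mathcal{L}(\mathcal{P})$ passing through $p_{i}$. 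Counting (point, line) incidences in two ways,
$$\sum_{i=1}^{n} m_{i} = \sum_{r \geq 2} r \cdot t_{r}(\mathcal{A}),$$
so the conjecture reduces to showing $\max_{i} m_{i} \geq \lceil n/3 \rceil$.

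Next, I would split into two cases according to $M := \max\{r : t_{r}(\mathcal{A}) > 0\}$, the largest number of $\mathcal{P}$-points lying on a common line. If $M > 2n/3$, some line $L$ in the original plane carries more than $2n/3$ points of $\mathcal{P}$; non-collinearity of $\mathcal{P}$ supplies a point $q \in \mathcal{P} \setminus L$, and the lines joining $q$ to the $M$ points of $\mathcal{P}$ on $L$ are pairwise distinct (any coincidence would force $q \in L$), so $m_{q} \geq M > 2n/3 \geq \lceil n/3 \rceil$. If $M \leq 2n/3$, then $\mathcal{A}$ satisfies the hypothesis $t_{r}(\mathcal{A}) = 0$ for $r > 2n/3$, so Langer's inequality delivers
$$\sum_{r \geq 2} r \cdot t_{r}(\mathcal{A}) \geq \left\lceil \frac{n^{2}}{3} + n \right\rceil,$$
and dividing by $n$ produces $\max_{i} m_{i} \geq n/3 + 1 \geq \lceil n/3 \rceil$. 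In either case the bound is attained, so $c = 3$ works.

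The main obstacle I anticipate is the passage between the real and complex settings: WDC is traditionally phrased over $\mathbb{R}$, whereas Langer's orbifold Miyaoka-Yau inequality is genuinely a complex algebraic result resting on the deep machinery of orbifold Euler numbers. This is resolved by the natural inclusion $\mathbb{P}^{2}_{\mathbb{R}} \hookrightarrow \mathbb{P}^{2}_{\mathbb{C}}$ emphasized by the author: any real arrangement embeds as a complex arrangement with identical combinatorial data $t_{r}$, so the inequality transfers without loss, and in fact the argument proves the statement uniformly for complex point sets. A secondary, minor, issue is the small-$n$ regime where the Langer bound could be vacuous; but for $n$ below a small absolute constant the conjecture is trivial, since non-collinearity of $\mathcal{P}$ already forces every point of $\mathcal{P}$ to lie on at least two lines of $\mathcal{L}(\mathcal{P})$, which already exceeds $\lceil n/3 \rceil$ in those cases.
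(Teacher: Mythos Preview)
Your argument is correct and lands on the same constant $c=3$ as the paper, but it reaches the decisive bound by a slightly different route. The paper follows Zeye Han: after disposing of the case where more than $\lceil n/3\rceil$ points are collinear, it applies the dual form of Bojanowski's inequality (Theorem~\ref{Bojan}) and combines it with the combinatorial identity $\binom{n}{2}=\sum_{r\geq 2}\binom{r}{2}l_{r}$ to deduce $\sum_{r\geq 2} r\,l_{r}\geq n(n+3)/3$, then pigeonholes. You instead split at the threshold $2n/3$ dictated by the hypothesis of Langer's inequalities and invoke the second Langer inequality $\sum_{r\geq 2} r\,t_{r}\geq \lceil n^{2}/3+n\rceil$ directly, bypassing the algebraic manipulation. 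The paper itself notes immediately after the proof that Han's result ``can be also obtained using Langer's inequality,'' which is exactly what you have done; your route is marginally shorter, at the cost of appealing to a statement whose proof in \cite{Langer} is less self-contained than Bojanowski's. Both approaches rest on the same orbifold Miyaoka--Yau technology, so your remark about the real-versus-complex issue applies equally to each.
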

The Weak Dirac Conjecture was proved independently by Beck \cite{Beck} and Szemer\'edi-Trotter \cite{Semeredi}, but they did not specify the actual value of $c$. In 2012, Payne and Wood showed the WDC with $c=37$ \cite{Wood}, and one of the main ingredients of their proof is Hirzebruch's inequality. 

On the other hand, as we can read in \cite[Chapter 6]{Klee}, it was more plausible to believe that $c = 3$, and it turned out that this prediction is correct \cite{Zeye}.
\begin{theorem}[Han]
The Weak Dirac Conjecture holds with $c=3$.
\end{theorem}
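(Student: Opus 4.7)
The plan is by contradiction, using projective duality together with the Hirzebruch-type inequalities of Sections 2--3. Suppose $\mathcal{P}=\{p_1,\ldots,p_n\}$ is non-collinear but $d(p_i)<\lceil n/3\rceil$ for every $i$, where $d(p_i)$ is the number of lines of $\mathcal{L}(\mathcal{P})$ through $p_i$. Dualizing, $\mathcal{P}$ becomes a line arrangement $\mathcal{P}^{*}$ of $n$ lines in $\mathbb{P}^2_{\mathbb{C}}$; its intersection points correspond bijectively to the members of $\mathcal{L}(\mathcal{P})$, so $t_r(\mathcal{P}^{*})$ counts the primal lines containing exactly $r$ points of $\mathcal{P}$, and the number of intersection points lying on the dual line $p_i^{*}$ equals $d(p_i)$.

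As a preliminary step, I would observe that no primal line carries more than $2n/3$ points of $\mathcal{P}$: if some line $\ell$ contained $r>2n/3$ points and $q\in\mathcal{P}\setminus\ell$ (which exists because $\mathcal{P}$ is non-collinear, hence $r<n$), then the lines $\overline{qp}$ with $p\in\ell\cap\mathcal{P}$ are $r$ distinct lines through $q$, forcing $d(q)\geq r>2n/3\geq\lceil n/3\rceil$, contradicting the hypothesis. Consequently $t_r(\mathcal{P}^{*})=0$ for all $r>2n/3$, so Bojanowski's inequality (Theorem~\ref{Bojan}) applies and gives
\begin{equation*}
t_2 + \tfrac{3}{4}t_3 \;\geq\; n + \sum_{r\geq 5}\!\left(\tfrac{r^{2}}{4}-r\right) t_r.
\end{equation*}

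Alongside this bound I would use the two classical incidence identities
\begin{equation*}
\sum_{r\geq 2}\binom{r}{2}\,t_r(\mathcal{P}^{*}) = \binom{n}{2}, \qquad \sum_{r\geq 2} r\,t_r(\mathcal{P}^{*}) = \sum_{i=1}^{n}d(p_i) < \tfrac{n^{2}}{3}.
\end{equation*}
The first identity lets one solve for $t_2$ and substitute into Bojanowski's bound; the second furnishes a sharp linear constraint on the remaining $t_r$'s. Feeding these into (\ref{langorb}) should yield, after careful but routine manipulation, a numerical inequality that fails for $n$ large, and the finitely many small cases can be dispatched by Sylvester--Gallai and direct inspection.

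The main obstacle is extracting the exact constant $c=3$ rather than some weaker $c=O(1)$. The chain of inequalities displayed after Theorem~\ref{Bojan} shows that Bojanowski's bound dominates its predecessors by a quadratic-in-$r$ margin, and it is precisely this quadratic slack, paired with the sharp constraint $\sum r\,t_r<n^{2}/3$, that must close up exactly if $c=3$ is to survive. In practice this almost certainly forces one to invoke Bojanowski's stronger inequality from the end of Section~3 with a carefully chosen parameter, and to split the sum over multiplicities into a low-$r$ regime (governed by the quadratic growth in (\ref{langorb})) and a high-$r$ regime (governed by the pair-count identity), rather than bludgeoning through with a single application of any one inequality.
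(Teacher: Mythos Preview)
Your first three paragraphs set things up exactly as the paper does: dualize, check $t_r=0$ for $r>2n/3$ via the off-line point argument, apply Bojanowski's inequality~(\ref{langorb}), and combine with $\sum_{r\geq 2}\binom{r}{2}l_r=\binom{n}{2}$. The gap is your final paragraph: you anticipate needing the parametrized inequality at the end of Section~3, a low-/high-$r$ split, and a separate treatment of small $n$. None of this is required; the constant $c=3$ drops out of (\ref{langorb}) in one line, for all $n$.

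The step you are missing is the identity
\[
\frac{r^{2}}{4}-r \;=\; \frac{1}{2}\binom{r}{2}-\frac{3}{4}\,r,
\]
valid for every $r\geq 2$; at $r=2,3,4$ it gives $-1,-\tfrac34,0$, which exactly reproduces the left-hand side of (\ref{langorb}). Thus (\ref{langorb}) is \emph{equivalent} to
\[
0 \;\geq\; n + \frac{1}{2}\sum_{r\geq 2}\binom{r}{2}l_r - \frac{3}{4}\sum_{r\geq 2} r\,l_r
\;=\; n + \frac{1}{2}\binom{n}{2} - \frac{3}{4}\sum_{r\geq 2} r\,l_r,
\]
i.e.\ $\sum_{r\geq 2} r\,l_r \geq n(n+3)/3$. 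Since $\sum_r r\,l_r=\sum_{p\in\mathcal{P}}d(p)$, pigeonhole yields a point with $d(p)\geq\lceil (n+3)/3\rceil=\lceil n/3\rceil+1$. That is the whole argument --- direct, not by contradiction, with no residual small cases and no appeal to Sylvester--Gallai. Your ``routine manipulation'' really is routine once you spot this identity; the speculation about the stronger Bojanowski inequality is unnecessary.
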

\begin{proof}
We will follow Han's approach \cite{Zeye}. First of all, note that if $\mathcal{P}$ is a finite set of non-collinear points and it contains at least $\lceil\frac{n}{3}\rceil+1$ points which lie on a line $\ell$, then we are done -- it is enough to consider a point $p \in \mathcal{P} \setminus \ell$ which is incident, by definition/construction, to at least $\lceil\frac{n}{3}\rceil+1$ lines, so we may assume that $\mathcal{P}$ does not contain $\lceil\frac{n}{3}\rceil+1$ collinear points. According to the dual version of Bojanowski's inequality, we have $$l_2+\frac{3}{4}l_3\geq n+\sum_{r\geq 5}\bigg(\frac{r^2}{4}-r\bigg)l_r ,$$ which can be written as $$l_2+\frac{3}{4}l_3\geq n+\sum_{r\geq 5}\frac{\binom{r}{2}}{2}l_r-\frac{3}{4}\sum_{r\geq 5}rl_r.$$
Using the combinatorial count $\binom{n}{2} = \sum_{r\geq 2} \binom{r}{2}l_{r}$, we obtain
$$l_2+\frac{3}{4}l_3\geq n+\frac{{n\choose2}}{2}-\sum_{r= 2}^4\frac{{r\choose2}}{2}l_r-\frac{3}{4}\sum_{r\geq 5}rl_r$$
This gives $$\sum_{r\geq2}rl_r\geq\frac{n(n+3)}{3},$$
and we finally obtain $$\sum_{p\in\mathcal{P}}{\rm mult}_{p} \geq\frac{n(n+3)}{3}.$$
Using the Pigeonhole Principle, there exists a point from $\mathcal{P}$ which is incident to at least $\lceil\frac{n}{3}\rceil+1$ lines from $\mathcal{L}(\mathcal{P})$, and it completes the proof.
\end{proof}
It is worth mentioning that Han's result can be also obtained using Langer's inequality (cf. \cite[Corollary 1.2]{Frank}). 
\subsection{Beck's theorem on two extremes}
In this subsection, we would like to report on some progress towards better estimations in Beck's Theorem \cite[Theorem 3.1]{Beck}.
\begin{theorem}[Beck]
For a finite set $\mathcal{P}$ of $n$ points in $\mathbb{R}^{2}$ one of the following is true:
\begin{itemize}
\item there exists a line that contains $c_{1}  n$ points from $\mathcal{P}$ for some positive $c_{1}$;
\item there are at least $c_{2} n^2$ lines determined by $\mathcal{P}$.
\end{itemize}
\end{theorem}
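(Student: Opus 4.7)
The plan is to mimic Zeye Han's proof of the Weak Dirac Conjecture, now applying the dual form of Bojanowski's inequality (Theorem~\ref{Bojan}) to harvest the number of spanned lines instead of the maximum multiplicity at a point. We may assume that $\mathcal{P}$ is non-collinear (otherwise a single line contains all $n$ points and the first alternative holds with $c_{1}=1$) and that $n\geq 6$, since the finitely many smaller cases can be absorbed into the constants $c_{1},c_{2}$.

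First I would split on the richest line through $\mathcal{P}$. If some line contains strictly more than $2n/3$ points of $\mathcal{P}$, the first alternative holds with $c_{1}=2/3$ and we are done. Otherwise, writing $l_{r}$ for the number of lines meeting $\mathcal{P}$ in exactly $r$ points, we have $l_{r}=0$ for $r>2n/3$. Projective-dually, $\mathcal{P}$ corresponds to an arrangement of $n$ lines in $\mathbb{P}^{2}_{\mathbb{C}}$ (using the inclusion $\mathbb{R}\subset\mathbb{C}$ fixed in the introduction) whose $r$-fold points are counted by $l_{r}$; the hypothesis $l_{r}=0$ for $r>2n/3$ is exactly what Theorem~\ref{Bojan} requires, and it yields
$$l_{2}+\frac{3}{4}l_{3}\geq n+\sum_{r\geq 5}\left(\frac{r^{2}}{4}-r\right)l_{r}.$$

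Next, I would perform exactly the same algebraic rearrangement as Han, using the identity $\binom{n}{2}=\sum_{r\geq 2}\binom{r}{2}l_{r}$ to eliminate the right-hand sum in favor of a linear expression; this is immediate from the identity $\frac{r^{2}}{4}-r=\frac{1}{2}\binom{r}{2}-\frac{3r}{4}$. The computation produces
$$\sum_{r\geq 2}r\,l_{r}\geq\frac{n(n+3)}{3}.$$
Whereas Han converts this inequality into a high-multiplicity point via pigeonhole over $\mathcal{P}$, for Beck's theorem I instead use the uniform bound $r\leq 2n/3$ to get
$$\sum_{r\geq 2}l_{r}\geq\frac{\sum_{r\geq 2}r\,l_{r}}{2n/3}\geq\frac{3}{2n}\cdot\frac{n(n+3)}{3}=\frac{n+3}{2},$$
so $\mathcal{P}$ spans at least $n/2$ lines, yielding the second alternative with $c_{2}=1/2$.

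I expect no serious obstacle: once one views Han's inequality as a statement about the total line count (via a trivial bound on richness), the proof essentially writes itself. The only points worth double-checking are the applicability of Bojanowski's inequality to real configurations via $\mathbb{R}\subset\mathbb{C}$ and the finitely many small-$n$ cases. I would also remark that this Hirzebruch-style argument yields only the weak $\Omega(n)$ form of Beck's theorem as stated; the classical quadratic bound $\Omega(n^{2})$ seems to require incidence-geometric tools (Szemer\'edi--Trotter) not directly available from the Bogomolov--Miyaoka--Yau framework.
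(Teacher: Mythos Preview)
The paper does not supply its own proof of this theorem; it is quoted from Beck's original paper, with the subsequent paragraph merely recording the constants obtained by Payne--Wood and the refinement by de Zeeuw. So there is no in-paper argument to compare against.

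Your argument is sound for the statement exactly as printed, with the second alternative reading ``at least $c_{2}n$ lines.'' Note, however, that this is almost certainly a typo in the survey for $c_{2}n^{2}$: Beck's original result, the Payne--Wood constants $c_{1}=c_{2}=\tfrac{1}{100}$, and de Zeeuw's $n^{2}/9$ bound quoted immediately afterward all refer to a \emph{quadratic} count of spanned lines. For the merely linear version you prove, the Hirzebruch machinery is vast overkill --- the de~Bruijn--Erd\H{o}s theorem (cited elsewhere in the paper) already gives at least $n$ spanned lines for any non-collinear set, so $c_{2}=1$ trivially.

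More importantly, your closing remark is wrong. You assert that the quadratic bound $\Omega(n^{2})$ ``seems to require incidence-geometric tools (Szemer\'edi--Trotter) not directly available from the Bogomolov--Miyaoka--Yau framework,'' but the very next theorem quoted in the paper is de Zeeuw's bound of $n^{2}/9$ spanned lines, obtained precisely from Langer's inequality --- a BMY-type input. Concretely: from your own inequality $I:=\sum_{r\geq 2} r\,l_{r}\geq n(n+3)/3$ together with the double-count $\sum_{r\geq 2}r(r-1)l_{r}=n(n-1)$, Cauchy--Schwarz gives
\[
\sum_{r\geq 2} l_{r}\;\geq\;\frac{I^{2}}{\sum_{r\geq 2} r^{2}l_{r}}\;=\;\frac{I^{2}}{n(n-1)+I}\;\geq\;\frac{(n+3)^{2}}{12},
\]
which is already quadratic in $n$; de Zeeuw then sharpens the constant by adjusting the richness threshold to $(6+\sqrt{3})/9$ instead of $2/3$. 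The factor of $n$ you lost came from bounding $\sum_{r} r\,l_{r}$ by $(2n/3)\sum_{r} l_{r}$ rather than combining it with the second-moment identity.
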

Beck in his paper gave $c_{1} = \frac{1}{100}$ and $c_{2}$ was unspecified, Payne and Wood in \cite[Theorem 5]{Wood} provided $c_{1} = c_{2} = \frac{1}{100}$. Using Langer's inequality, Frank de Zeeuw observed \cite[Theorem 2.1]{Frank} that one can significantly improve estimations on $c_{1}$ and $c_{2}$.
\begin{theorem}[de Zeeuw]
\label{strongbeck}
Let $\mathcal{P}$ be a finite set of $n$ points in $\mathbb{R}^{2}$, then one of the following is true:
\begin{itemize}
\item there is a line that contains more than $\frac{6 + \sqrt{3}}{9}n$ points of $\mathcal{P}$;
\item there are at least $\frac{n^{2}}{9}$ lines determined by $\mathcal{P}$.
\end{itemize}
\end{theorem}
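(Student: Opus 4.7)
I would follow the dualisation strategy used in Han's proof above: let $l_r$ count lines of $\mathcal{L}(\mathcal{P})$ containing exactly $r$ points of $\mathcal{P}$, set $T=\sum_{r\geq 2}l_r$, and let $k$ denote the maximum line-multiplicity. Assume the first alternative fails, so $k\leq \tfrac{6+\sqrt{3}}{9}n$; the goal is to deduce $T\geq n^{2}/9$. The argument splits naturally according to whether $k\leq 2n/3$ or not.

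When $k\leq 2n/3$, the dualised Langer inequalities apply:
\[
\sum_{r\geq 2}r^{2}l_r\geq \tfrac{4n^{2}}{3},\qquad \sum_{r\geq 2}rl_r\geq \tfrac{n^{2}+3n}{3}.
\]
Combined with the pair-count identity $\sum_{r}\binom{r}{2}l_r=\binom{n}{2}$ and the Cauchy-Schwarz bound $T\cdot\sum_{r}r^{2}l_r\geq\bigl(\sum_{r}rl_r\bigr)^{2}$, these give a quadratic lower bound on $T$. A bare use of Cauchy-Schwarz yields only $T\gtrsim n^{2}/12$; to reach the stronger $n^{2}/9$ one additionally invokes the max-multiplicity bound $\sum_{r}r^{2}l_r\leq k\sum_{r}rl_r$ coming from $r\leq k$, and feeds all four inputs into a small linear program in the vector $(l_r)$. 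The minimum concentrates on a two-point distribution (essentially $l_{2}$ and $l_{6}$), and the evaluation of this extremum meets $n^{2}/9$ precisely at the critical value of $k/n$ determined by the quadratic produced by balancing Cauchy-Schwarz against the max-multiplicity bound; its positive root is $\tfrac{6+\sqrt{3}}{9}$.

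For $\tfrac{2n}{3}<k\leq\tfrac{6+\sqrt{3}}{9}n$, Langer no longer applies to $\mathcal{P}$ itself, so I would pass to a subconfiguration. Let $\ell_{0}$ be a line of multiplicity $k$ and remove $j:=\lceil 3k-2n\rceil$ of its points; the remaining set $\mathcal{P}'\subset\mathcal{P}$ has $n':=n-j=3(n-k)$ points and maximum multiplicity exactly $2n'/3$, so the main-case argument applies to $\mathcal{P}'$ and produces $T'\geq (n')^{2}/9$ lines spanned by $\mathcal{P}'\subset\mathcal{P}$. To this one adds the ordinary lines recovered through the removed points: each removed $p\in\ell_{0}$ yields a distinct line $\overline{pq}\in\mathcal{L}(\mathcal{P})$ for every $q\in\mathcal{P}'\setminus\ell_{0}$ such that $\overline{pq}\cap\mathcal{P}=\{p,q\}$, and these are lines not counted in $T'$. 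Combining the two contributions and using the relation $n'=3(n-k)$ together with the hypothesis $k\leq\tfrac{6+\sqrt{3}}{9}n$ brings the total above $n^{2}/9$.

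\textit{Main obstacle.} The delicate point is pinning down the exact constant $\tfrac{6+\sqrt{3}}{9}$. The naive Cauchy-Schwarz bound from Langer falls short of $n^{2}/9$ by a factor of $\tfrac{4}{3}$, so it must be tightened by the max-multiplicity constraint in exactly the right weighting; the resulting quadratic in $k/n$ produces $\sqrt{3}$ in its positive root, explaining the irrational constant. In the residual range, the same threshold re-emerges from trading the size of the subconfiguration against the count of recovered ordinary lines, and the combinatorial bookkeeping must carefully avoid double-counting between $T'$ and the lines through the removed points.
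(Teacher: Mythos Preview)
The paper itself does not prove this theorem: it is simply quoted from de~Zeeuw's preprint \cite{Frank}, so there is no ``paper's own proof'' to compare your outline against.

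Independently of that, the main case of your sketch does not close. You claim that for $k\le 2n/3$ the four inputs --- the two Langer inequalities, the pair count, and the max-multiplicity bound --- fed into a linear program force $T\ge n^{2}/9$, with the extremal distribution supported on $\{2,6\}$. But set $I=\sum_r r\,l_r$ and $S=\sum_r r^{2}l_r$. The pair count gives $S=I+n(n-1)$, and Langer gives $I\ge n^{2}/3+n$; the max-multiplicity bound $S\le kI$ becomes $I\ge n(n-1)/(k-1)$, which for $k\le 2n/3$ is only linear in $n$ and hence is already implied by Langer. So the only effective constraints in your LP are
\[
\sum_r r\,l_r \;=\; \tfrac{n^{2}}{3}+n,\qquad \sum_r r(r-1)\,l_r \;=\; n(n-1),\qquad l_r\ge 0,
\]
and here the minimum of $T=\sum_r l_r$ is attained on a two-point support around the ratio $S/I\to 4$, namely $\{3,4\}$, \emph{not} $\{2,6\}$. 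Solving explicitly gives $l_3=4n/3$, $l_4=n(n-9)/12$ and
\[
T_{\min}\;=\;\frac{n^{2}+7n}{12}\;\sim\;\frac{n^{2}}{12},
\]
which is strictly smaller than $n^{2}/9$ once $n\ge 22$. (Your support $\{2,6\}$ yields $T=(n^{2}+6n)/9$, but that is not the LP optimum.) Equivalently, Cauchy--Schwarz already gives $T\ge I^{2}/S=(n+3)^{2}/12$, and none of the remaining inputs improves the leading constant, because the constraint $r\le k$ is slack at the optimum.

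Consequently the second range also breaks: you reduce $2n/3<k\le(6+\sqrt{3})n/9$ to the main case applied to $\mathcal P'$ with $n'=3(n-k)$, but the main case only yields $T'\gtrsim (n')^{2}/12$, so the bookkeeping with the recovered connecting lines cannot reach $n^{2}/9$, and the constant $(6+\sqrt{3})/9$ does not drop out of the mechanism you describe. A genuinely different organisation of the argument is needed; the Langer/Bojanowski inequality by itself, combined linearly with the pair count, caps out at the $n^{2}/12$ barrier.
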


\begin{proof}
First suppose that $\mathcal{P}$ has at most $2n/3$ collinear points. By the dual version of Langer's inequality, 
$$\sum_{r\geq 2} r l_{r} \geq \frac{n(n+3)}{3}.$$
If we add Langer's inequality to the dual version of Melchior's inequality
$$\sum_{r\geq 2} (3-r) l_r\geq 3$$
we obtain
$$\sum_{r\geq 2} r l_r + \sum_{r\geq 2} (3-r) l_r
\geq \frac{n(n+3)}{3} + 3,$$
or equivalently
\begin{equation*}
3\cdot |\mathcal{L}(\mathcal{P})| = 3 \cdot \sum_{i\geq 2} l_i \geq \frac{n^2+3n+9}{3}.
\end{equation*}
Thus $|\mathcal{L}(\mathcal{P})| \geq n^2/9$, which proves the second alternative.

Assume that $\mathcal{P}$ has more than $2n/3$ collinear points. Let $\bar{\ell}$ be the line that contains more than $2n/3$ points from $\mathcal{P}$ such that $|\mathcal{P} \cap \bar{\ell}| = \alpha  n$ and $|\mathcal{P} \, \setminus \, \mathcal{P}'| = (1-\alpha) n$, where $\mathcal{P}' \subset \mathcal{P}$ is  the set of points that is contained in $\bar{\ell}$.
We now lower bound $|\mathcal{L}(P)|$. Count one line for every choice of a point from $\mathcal{P} \cap \bar{\ell}$ and a point from $\mathcal{P} \setminus \mathcal{P}'$, but we may overcount by one for every pair of points from $\mathcal{P} \setminus \mathcal{P}'$ when the line through that pair hits $\bar{\ell}$ in a point from $\mathcal{P}$. This leads to 
\[|\mathcal{L}(\mathcal{P})| = \sum_{r\geq 2} \ell_{r} \geq \alpha n\cdot (1-\alpha )n - \binom{(1-\alpha )n}{2} \geq \left(-\frac{3}{2}\alpha^2 +2\alpha  - \frac{1}{2}\right)n^2.
\]
As long as $\bigg(-3\alpha^2/2 +2\alpha - 1/2 \bigg) \geq 1/9$, the second alternative holds. Solving this quadratic inequality with respect to $\alpha$ we see that this is the case when $\alpha\leq (6+\sqrt{3})/9$.
Otherwise, the first alternative holds, and this completes the proof.
\end{proof}
In the light of Beck's theorem on two extremes, we can ask whether there exists a reasonable lower bound on the number of lines that are determined by a few points. The following result, which can be viewed as a corollary to Theorem \ref{strongbeck}, provides a surprising answer in the case of lines determined by two and three points.
\begin{corollary} 
Let $\mathcal{P}$ be a set of $n$ points in $\mathbb{R}^2$ such that at most $\alpha  n$ points are collinear with $\alpha = (6+\sqrt{3})/9$. Then
\[ l_2+ l_3 \geq \frac{n^2}{18}.\]
\end{corollary}
\begin{proof}
We add $l_2 + 2 l_3$ to the both sides of the dual version of Melchior's inequality obtaining
\[2 l_2+ 2 l_3 \geq 3 + l_2 + 2 l_3 + \sum_{i\geq 4} (i-3) l_i \geq 3 + \sum_{i\geq 2} l_i.\]
By Theorem \ref{strongbeck}, we have $$ l_2 + l_3 \geq \frac{1}{2}\cdot \sum_{r\geq 2} l_{r} \geq \frac{1}{2} \cdot \frac{n^{2}}{9} = \frac{n^2}{18}.$$
\end{proof}
It is natural to ask whether $n$ points in $\mathbb{C}^{2}$, with not too many collinear, determine a quadratic number of lines with at most three points. Following de Zeeuw \cite[Conjecture 4.5]{Frank1}, let us formulate the following conjecture.
\begin{conjecture}
There exists a constant $c > 0$ such that, if a set $\mathcal{P}$ of $n$ points in $\mathbb{C}^{2}$ has
at most $cn$ collinear, then $\mathcal{P}$ determines at least $cn^2$ lines with at most three points.
\end{conjecture}
According to my best knowledge, this conjecture is still open. However, we can show the following result involving $\ell_{2}, \ell_{3}, \ell_{4}$ -- it was presented by the author during the workshop \emph{Algebraic Geometry and Combinatorics} in January 2019 in  Loughborough.
\begin{proposition}
Let $\mathcal{P}$ be a subset of $n$ distinct points in $\mathbb{C}^{2}$ with at most $2n/3$ collinear.  Then
$$l_{2} + l_{3} + l_{4} \geq \frac{n(n+15)}{18} \approx \frac{n^{2}}{18}.$$
\end{proposition}
\begin{proof}
According to the dual version of Bojanowski's inequality, we have
$$l_{3} + \frac{3}{4} l_{3} \geq n + \sum_{r\geq 5}\bigg(\frac{r^{2}-4r}{4}\bigg) l_{r}.$$
Observe that for $r\geq 5$ one has  
$$\frac{r^{2}-4r}{4} \geq \frac{1}{8} \cdot \frac{r^{2}-r}{2},$$
and using the combinatorial count
$${ n \choose 2} = l_{2} + 3 l_{3} + 6 l_{4} + \sum_{r\geq 5}{r \choose 2} l_{r}$$ we obtain
$$l_{2} + \frac{3}{4} l_{3} \geq n + \frac{1}{8} \bigg({ n \choose 2} - l_{2} - 3 l_{3} - 6 l_{4}  \bigg).$$
Simple manipulations give
$$\frac{9}{8}(l_{2} + l_{3} + l_{4}) \geq \frac{9}{8} l_{2} + \frac{9}{8} l_{3} + \frac{6}{8} l_{4} \geq \frac{n(n+15)}{16},$$
so we finally obtain 
$$l_{2} + l_{3} + l_{4} \geq \frac{n(n+15)}{18}.$$
\end{proof}
\subsection{Simplicial line arrangements}
Let $\mathcal{A} = \{H_{1}, ..., H_{d}\} \subset \mathbb{R}^{n}$ be a central arrangement of $d$ hyperplanes. We say that $\mathcal{A}$ is simplicial if every connected component of $\mathbb{R}^{n} \setminus \bigcup_{i=1}^{d} H_{i}$ is an open simplicial cone. We say that an arrangement $\mathcal{A} \subset \mathbb{R}^{n}$ is irreducible if $\mathcal{A}$ cannot be expressed as a product arrangement $\mathcal{A}_{1} \times \mathcal{A}_{2}$ with $\mathcal{A}_{1} \subset \mathbb{R}^{\ell}$, $\mathcal{A}_{2} \subset \mathbb{R}^{m}$, and $\ell+m=n$. Using a natural projectivization we can think about rank $n=3$ simplicial hyperplane arrangements as line arrangements in $\mathbb{P}^{2}_{\mathbb{R}}$. Let us recall two properties of simplicial line arrangements.
\begin{enumerate}
    \item From Melchior's proof \cite{Melchior} we see that for any line arrangement $\mathcal{L} \subset \mathbb{P}^{2}_{\mathbb{R}}$ one has $$3-t_{2} + \sum_{r\geq 3}(r-3)t_{r} + \sum_{k\geq 3}(k-3)p_{k}=0,$$
    where $p_{k}$ denotes the number of regions in the complement $\mathbb{P}^{2}_{\mathbb{R}} \setminus \bigcup_{\ell \in \mathcal{L}}\ell$ having $k$ sides. If $\mathcal{L}$ is simplicial, then $p_{k}=0$ for $k\geq 4$, and we have the following equality
    $$t_{2} = 3 + \sum_{r\geq 3}(r-3)t_{r}.$$

\item There is a folklore result providing a bound on the multiplicities of intersection points of irreducible simplicial line arrangements, namely $t_{r} = 0$ if only $r > d/2$ -- see \cite[Proposition 2.1]{Geis} for a modern proof of that result. Observe that the irreducibility assumption is crucial -- a near pencil arrangement $\mathcal{A}$ of $d$ lines is a reducible simplicial arrangement with $t_{d-1} = 1$. The above observation allows us to use freely Langer's inequalities and Bojanowski's inequality (\ref{langorb}) in the irreducible case.
\end{enumerate}
Now we present some very recent and interesting results from the PhD thesis of Geis \cite{Geis}. We start with an observation which gives a bound on multiplicities of singular points of a certain class of simplicial line arrangements \cite[Remark 2.13 iv]{Geis}.
\begin{proposition}
Let $\mathcal{L}$ be a simplicial line arrangement in $\mathbb{P}^{2}_{\mathbb{R}}$ such that $t_{2} \geq t_{3}$ and $t_{i} = 0$ for $i \not\in \{2,3, x\}$. Then $x \leq 8$.
\end{proposition}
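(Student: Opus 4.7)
The plan is to combine the simplicial identity recalled just before the statement, Bojanowski's inequality~(\ref{langorb}), and the hypothesis $t_2 \geq t_3$, extracting from them a single algebraic constraint on $x$ that fails once $x \geq 9$.

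First, I would use the simplicial identity $t_2 = 3 + \sum_{r\geq 4}(r-3)t_r$. Since only $t_x$ can contribute among terms with $r \geq 4$ (and $x \geq 4$, because $x \notin \{2,3\}$ and the statement is vacuous unless an $x$-fold point actually occurs, so $t_x \geq 1$), this identity collapses to $t_2 = 3 + (x-3)\,t_x$.

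Second, I would invoke Bojanowski's inequality~(\ref{langorb}). Its hypothesis $t_r = 0$ for $r > 2d/3$ is built into the simplicial setting, and the hypothesis $d\geq 6$ is harmless: in the argument by contradiction I would assume $x \geq 9$, which forces $d \geq x \geq 9$. With only $t_x$ surviving on the right-hand side, the inequality reads
\[
t_2 + \tfrac{3}{4}t_3 \;\geq\; d + \tfrac{x(x-4)}{4}\,t_x.
\]

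Third, I would substitute the closed form for $t_2$ and bound $\tfrac{3}{4}t_3$ from above by $\tfrac{3}{4}t_2$ using the hypothesis $t_3 \leq t_2$. After collecting terms, the two sides reorganize into the single estimate
\[
21 - 4d \;\geq\; (x^2 - 11x + 21)\,t_x.
\]
The quadratic $x^2 - 11x + 21$ has roots $(11 \pm \sqrt{37})/2$, so it is negative for integer $x \in \{3,\dots,8\}$ and takes value $\geq 3$ for every integer $x \geq 9$. Thus if $x \geq 9$, the right-hand side is at least $3$, while the left-hand side is at most $21 - 36 = -15$, which is absurd.

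Most of the work is routine algebra. The only point that requires any attention is choosing to trade $\tfrac{3}{4}t_3$ for $\tfrac{3}{4}t_2$ via the hypothesis (rather than discarding it or bounding it by $\binom{d}{2}$), because this is precisely what makes the coefficient of $t_x$ equal to the quadratic $x^2 - 11x + 21$ whose sign change lies between $x=8$ and $x=9$. I do not foresee a substantive obstacle beyond tracking these coefficients carefully.
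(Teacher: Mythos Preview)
Your proposal is correct and follows essentially the same route as the paper: both combine the simplicial identity $t_2 = 3 + (x-3)t_x$, Bojanowski's inequality, and the bound $t_2 + \tfrac{3}{4}t_3 \leq \tfrac{7}{4}t_2$, and both hinge on the sign of the same quadratic $x^2 - 11x + 21$. The only cosmetic difference is that the paper solves for $t_x$ in terms of $t_2$ and finishes with the simplicial bound $x \leq d/2$, whereas you substitute $t_2$ in terms of $t_x$ and finish with the simpler observation $d \geq x \geq 9$; your endgame is in fact slightly cleaner.
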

\begin{proof}
Since $t_{2} \geq t_{3}$ and by Bojanowski's inequality (\ref{Bojan}), one has the following chain of inequalities:
$$\frac{7}{4}t_{2} \geq t_{2} + \frac{3}{4}t_{3} \geq d + \frac{x(x-4)}{4}t_{x} = d + \frac{x(x-4)(t_{2}-3)}{4(x-3)},$$
where the last equality follows from Melchior's inequality for simplicial line arrangements. Assume now that $x\geq 9$, which implies that
$$0 \leq \bigg( \frac{x(x-4)}{4(x-3)} - \frac{7}{4}\bigg)t_{2} \leq \frac{3x(x-4)}{4(x-3)} - d.$$
This allows us to deduce that
$$d \leq \frac{3x(x-4)}{4(x-3)} \leq \frac{3x}{4} \leq \frac{3d}{8} < d,$$
a contradiction.
\end{proof}
Next, we present an application of one of Langer's inequalities providing a quadratic lower bound on ${\rm max}(t_{2},t_{3})$ for simplicial arrangements \cite[Theorem 5.2]{Geis}.
\begin{theorem}
Let $\mathcal{L}$ be a simplicial line arrangement in $\mathbb{P}^{2}_{\mathbb{R}}$. Then
$${\rm max}(t_{2},t_{3}) > \bigg\lceil \frac{d^{2} + 3d}{27} \bigg\rceil.$$
\end{theorem}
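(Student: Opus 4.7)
My plan is to combine Langer's second inequality with Melchior's equality for simplicial arrangements. Since a simplicial line arrangement has $t_r = 0$ for $r > d/2 \leq 2d/3$, Langer's bound $\sum_{r \geq 2} r t_r \geq \lceil d^2/3 + d \rceil$ applies. I would first extract a clean identity valid only in the simplicial case: starting from Melchior's equality $t_2 = 3 + \sum_{r\geq 4}(r-3) t_r$ and setting $f_0 = \sum_{r \geq 2} t_r$ for the total number of intersection points, writing $r = (r-3) + 3$ inside $\sum_{r \geq 4} r t_r$ and rearranging should yield
$$\sum_{r\geq 2} r t_r = 3 f_0 - 3.$$

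Combining this identity with Langer would give the lower bound $f_0 \geq \lceil (d^2+3d)/9 \rceil + 1$ after dividing by $3$ and applying the rule $\lceil \lceil x \rceil / n\rceil = \lceil x/n \rceil$. For a matching upper bound on $f_0$, Melchior's equality itself gives $t_2 \geq 3 + \sum_{r\geq 4} t_r$, from which
$$f_0 = t_2 + t_3 + \sum_{r \geq 4} t_r \leq 2 t_2 + t_3 - 3 \leq 3\,\max(t_2, t_3) - 3.$$
Sandwiching the two bounds would then yield $3 \max(t_2, t_3) \geq f_0 + 3 \geq \lceil (d^2+3d)/9 \rceil + 4$.

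All that would remain is to divide by three and extract the strict inequality. Writing $B = \lceil (d^2+3d)/9 \rceil$ and checking the three residue classes of $B$ modulo $3$, one verifies $\lceil (B+4)/3 \rceil \geq \lceil B/3 \rceil + 1$ in each case. Since $\lceil B/3 \rceil = \lceil (d^2+3d)/27 \rceil$, this would produce $\max(t_2, t_3) \geq \lceil (d^2+3d)/27 \rceil + 1$, strictly larger than $\lceil (d^2+3d)/27 \rceil$. I expect the main obstacle to be not conceptual---the quadratic strength comes entirely from Langer's orbifold Miyaoka-Yau bound, and the simplicial hypothesis supplies exactly the Melchior-type identity needed---but rather the careful ceiling bookkeeping required to upgrade $\geq$ to the desired strict inequality.
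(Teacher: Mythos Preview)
Your argument is correct. The identity $\sum_{r\geq 2} r t_r = 3f_0 - 3$ follows cleanly from Melchior's equality in the simplicial case, Langer's second inequality then forces $f_0 \geq \lceil (d^2+3d)/9\rceil + 1$, and the upper bound $f_0 \leq 3\max(t_2,t_3)-3$ via $\sum_{r\geq 4} t_r \leq t_2 - 3$ is exactly right; the residue check for the ceiling step is also fine.

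The paper does not actually supply a proof of this theorem---it only quotes the statement from Geis's thesis and remarks that it is ``an application of one of Langer's inequalities.'' Your proof is precisely such an application, combining Langer's bound $\sum_{r\geq 2} r t_r \geq \lceil d^2/3 + d\rceil$ with the two simplicial properties listed just before the statement, so it matches the approach the paper points to.
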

In order to provide you some intuition behing this result, let us recall that Erd\H{o}s and Purdy \cite{ErPu} proved that if $\mathcal{L}$ is an arrangement of $d\geq 25$ lines in the real projective plane such that $t_{d}=0$, then
$${\rm max}(t_{2},t_{3}) \geq d-1.$$
Moreover, they also proved that if $t_{2} < d-1$, then $t_{3}\geq cd^{2}$ for some positive constant $c$.

Before we finish this section, it is worth presenting a Melchior-type inequality for simplicial line arrangements also showed by Geis \cite[Lemma 5.2 c]{Geis} -- the key advantage of this result is that it provides constraints on the number of triple points.
\begin{proposition}
Let $\mathcal{L}$ be an irreducible simplicial line arrangement in $\mathbb{P}^{2}_{\mathbb{R}}$. Then
$$t_{3} \geq 4 + \sum_{r\geq 5}(r-4)t_{r}.$$
\end{proposition}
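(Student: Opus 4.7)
The plan has three steps: reformulate the inequality via Melchior's identity, establish a local structural claim about the triangular faces using the irreducibility hypothesis, and then finish with a short double count.

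First I would use the simplicial Melchior identity $t_{2} = 3 + \sum_{r \geq 4}(r-3)t_{r}$, together with the Euler relation $\sum_{r} r\, t_{r} = 3 f_{0} - 3$ (which comes from $3f = 2e$), to check that the target $t_{3} \geq 4 + \sum_{r \geq 5}(r-4)t_{r}$ is equivalent to the cleaner bound
$$\sum_{r \geq 2} r\, t_{r} \;\geq\; 6\, t_{2},$$
or equivalently, in terms of the number of triangular faces, $f \geq 4 t_{2}$.

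The main obstacle is then the following local claim: \emph{in an irreducible simplicial line arrangement, no edge has both endpoints of multiplicity $2$}. I would argue this by contradiction. Suppose some edge $e$ on a line $\ell$ has both endpoints $u, v$ double, and let $\ell', \ell''$ be the other lines through $u$ and $v$, respectively. Tracing the boundaries of the two triangular faces $T_{1}, T_{2}$ adjacent to $e$ -- at $u$ the edge other than $e$ must lie on $\ell'$, and at $v$ it must lie on $\ell''$ -- forces each third vertex to coincide with the unique intersection point $w = \ell' \cap \ell''$. For both arcs of $\ell'$ joining $u$ to $w$ (one per side of $e$) to be genuine edges of the arrangement, $\ell'$ can contain no intersection points besides $u$ and $w$. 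Since $u$ is double, none of the remaining $d-2$ lines meet $\ell'$ at $u$, so they must all meet $\ell'$ at $w$; this forces $\mathrm{mult}(w) \geq d - 1$, contradicting the irreducibility hypothesis $t_{r} = 0$ for $r > d/2$ as soon as $d \geq 3$.

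Given this local claim, the proposition follows from a straightforward double count. Each double point is a corner of exactly four triangular faces (the four sectors at a double intersection, each being a triangle by simpliciality), so the total number of pairs (triangle, double vertex of that triangle) equals $4 t_{2}$. By the claim, each triangle contributes at most one such pair, so the count is also at most $f$. Combining these two estimates yields $4 t_{2} \leq f$, which is the reformulation of the proposition.
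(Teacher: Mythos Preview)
Your argument is sound. The reformulation is correct: using the simplicial Melchior identity $t_{2}=3+\sum_{r\geq 4}(r-3)t_{r}$ together with $e=\sum_{r}rt_{r}$ and $3f=2e$, the target inequality is indeed equivalent to $\sum_{r\geq 2}rt_{r}\geq 6t_{2}$, i.e.\ $f\geq 4t_{2}$. Your local claim is also correctly proved. If an edge $e\subset\ell$ has both endpoints $u,v$ double, then at $u$ the two triangles $T_{+},T_{-}$ adjacent to $e$ must leave along the two opposite half-edges of $\ell'$, and both must terminate at the third vertex $w=\ell'\cap\ell''$; hence both arcs of $\ell'$ between $u$ and $w$ are edges of the arrangement, so $\ell'$ carries only the two intersection points $u$ and $w$. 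Since $u$ is double, every one of the remaining $d-2$ lines must then pass through $w$, giving $\mathrm{mult}(w)\geq d-1>d/2$, which violates the irreducibility hypothesis for $d\geq 3$. The final double count then goes through: four triangles at each double point, and at most one double vertex per triangle (two double vertices of a triangle would be joined by an edge, contradicting the claim), so $4t_{2}\leq f$.

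As for comparison with the paper: the paper does not actually give a proof of this proposition. It only cites Geis's thesis (his Lemma~5.2(c)) and remarks that ``the proof provided by Geis does not use any Hirzebruch-type inequalities.'' Your proof is entirely combinatorial, relying only on Euler's formula, the triangulation structure, and an elementary incidence argument, so it is fully in the spirit of that remark. Whether your double-count via the ``no edge with two double endpoints'' lemma is literally the same route Geis takes cannot be checked from the paper itself, but your argument is self-contained and complete.
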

It is worth mentioning that the proof provided by Geis does not engage any Hirzebruch-type inequalities.

Concluding this section, if we combine Melchior's inequality with Bojanowski's inequality for irreducible simplicial line arrangements, we obtain the following chain of inequalities
$$t_{3} + \frac{4}{3}t_{4}+t_{5} \geq \frac{4}{3}(d-3) + \frac{1}{3}\sum_{r \geq 6}\bigg(r^{2}-8r+12\bigg)t_{r} \geq \frac{4}{3}(d-3),$$
which seems to be an interesting observation. Of course this inequality is sharp.

\section{Generalizations of Hirzebruch's inequalities for plane curve arrangements}
In this section, we present some natural generalizations of Hirzebruch's inequality for line arrangements in the context of higher degree plane curves. We start with the following definition.
\begin{definition}
Let $\mathcal{C} = \{C_{1}, ..., C_{k}\}$ be an arrangement of irreducible curves in the complex projective plane. We say that $\mathcal{C}$ is a $d$-arrangement if the following conditions hold:
\begin{enumerate}
\item all irreducible components $C_{i}$ are smooth and of the same degree $d\geq 1$,
\item all intersection points are ordinary singularities (i.e., these look locally like intersections of lines),
\item there is no point where all curves meet simultaneously. 
\end{enumerate}
\end{definition}
As we can observed, $d$-arrangements are higher degree generalizations of line arrangements, for instance $2$-arrangements will be called conic arrangements, even if in general conic arrangements might have non-ordinary intersection points, for instance tacnodes. The first result presents a Hirzebruch-type inequality for $d$-arrangements \cite[Theorem 2.3]{Pokora1}.
\begin{theorem} Let $\mathcal{C} \subset \mathbb{P}^{2}_{\mathbb{C}}$ be a $d$-arrangement of $k\geq 4$ curves with $d\geq 2$. Then
$$(5d^{2} - 6d)k + t_{2} + \frac{3}{4}t_{3} \geq \sum_{r\geq 5}(r-4)t_{r}.$$
\end{theorem}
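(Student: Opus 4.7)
The strategy would closely mirror Hirzebruch's Kummer-cover proof for line arrangements, replacing the degree-one sections by degree-$d$ sections and invoking the Miyaoka-Sakai strengthening of the Bogomolov-Miyaoka-Yau inequality to absorb the $\tfrac{3}{4}$ coefficient on $t_3$. Observe first that the target specialises correctly: putting $d=1$ and renaming $k \to d$, it becomes $-k + t_2 + \tfrac{3}{4}t_3 \geq \sum_{r\geq 5}(r-4)t_r$, i.e.\ the Miyaoka-Sakai-improved form of Hirzebruch's inequality at $n=3$. Our goal is to track how the degree-$d$ contributions repackage into $(5d^2-6d)k$.

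The first step is to build the cover. Let $s_i \in H^0(\mathbb{P}^2_{\mathbb{C}}, \mathcal{O}_{\mathbb{P}^2}(d))$ be the defining section of $C_i$. The no-common-point hypothesis makes the map
$$f\colon \mathbb{P}^2_{\mathbb{C}} \ni x \mapsto (s_1(x):\dots:s_k(x)) \in \mathbb{P}^{k-1}_{\mathbb{C}}$$
well-defined, and pulling back the Kummer cover ${\rm Km}\colon \mathbb{P}^{k-1}_{\mathbb{C}} \to \mathbb{P}^{k-1}_{\mathbb{C}}$ with exponent $n$ produces the abelian cover $X_n = \mathbb{P}^2_{\mathbb{C}} \times_{\mathbb{P}^{k-1}_{\mathbb{C}}} \mathbb{P}^{k-1}_{\mathbb{C}}$ with Galois group $(\mathbb{Z}/n)^{k-1}$, branched along $\bigcup C_i$. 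Because all intersection points of the arrangement are ordinary, the analytic type of the branch divisor at any such point matches that of a line arrangement, so the singular points of $X_n$ correspond bijectively to points $p \in \bigcup C_i$ with ${\rm mult}_p \geq 3$, and one simultaneous blow-up $\tau\colon Y_n \to X_n$ at these loci gives a smooth complex projective surface.

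The technical heart is the Chern-number computation. Applying standard abelian-cover formulas together with the adjunction identities $C_i^2 = d^2$, $K_{\mathbb{P}^2}\cdot C_i = -3d$, $g(C_i) = \binom{d-1}{2}$ and the combinatorial identity $d^2\binom{k}{2} = \sum_{r\geq 2}\binom{r}{2}t_r$, one can express $c_1^2(Y_n)/n^{k-3}$ and $c_2(Y_n)/n^{k-3}$ as polynomials of degree two in $n$ with coefficients depending on $k$, $d$, $f_0 = \sum t_r$, $f_1 = \sum r t_r$, and $t_2$. In comparison to the line case, each degree-$d$ branch curve injects additional terms of order $d^2 k$ and $d k$ coming from its self-intersection and genus, and these eventually assemble into the coefficient $5d^2 - 6d$ in front of $k$ after simplification.

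To finish, under $k\geq 4$ one checks, as in \cite{Hirzebruch,BHH87}, that $\kappa(Y_n)\geq 0$ for $n=3$ and moreover that $Y_n$ carries smooth rational or elliptic curves, so the Miyaoka-Sakai strengthening of BMY yields $3c_2(Y_n) - c_1^2(Y_n) \geq {\rm const} > 0$ with a constant large enough to soak up an extra $\tfrac{1}{4}t_3$ beyond the naive BMY bound. Writing the corresponding Hirzebruch-type polynomial $H(n) = (3c_2(Y_n) - c_1^2(Y_n))/n^{k-3}$ together with the Miyaoka-Sakai correction and evaluating the resulting inequality $H(3)\geq 0$ produces the asserted bound after rearrangement. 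The main obstacle is precisely this bookkeeping: one must carefully aggregate the $d$-dependent contributions of each branch curve with the blow-up contributions from the $r$-fold singularities, and verify that the Miyaoka-Sakai correction combines with the degree-$d$ terms to give exactly the coefficient $5d^2-6d$ on $k$ and $\tfrac{3}{4}$ on $t_3$, without affecting the $\sum(r-4)t_r$ right-hand side.
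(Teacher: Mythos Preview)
The survey does not supply a proof of this theorem; it is only stated and attributed to \cite[Theorem~2.3]{Pokora1}. Your outline matches that reference and the Hirzebruch template laid out in Section~2 of the survey: build the $(\mathbb{Z}/n\mathbb{Z})^{k-1}$ abelian cover via the degree-$d$ sections, resolve the ordinary singularities of multiplicity $\geq 3$, compute $c_1^2(Y_n)$ and $c_2(Y_n)$ as quadratic polynomials in $n$ with coefficients in $k$, $d$, $f_0$, $f_1$, $t_2$, verify $\kappa(Y_n)\geq 0$ for $k\geq 4$ and $n=3$, and invoke the Miyaoka--Sakai refinement of BMY to drag the coefficient of $t_3$ from $1$ down to $\tfrac{3}{4}$. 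Your sanity check at $d=1$ is also correct. So the strategy is right and essentially the same as the cited proof; what remains is exactly the bookkeeping you flag, and that is where the cited paper does the actual work.
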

It is natural to ask whether one can find an improvement of the above inequality, for instance in order to have the so-called quadratic right-hand side. It turns out that this can be achieved with help of Langer's ideas around his version of the orbifold Miyaoka-Yau inequality \cite[Theorem 2.2]{Pokora2}.
\begin{theorem}
\label{degree:d}
Let $\mathcal{C} = \{C_{1}, ..., C_{k}\} \subset \mathbb{P}^{2}_{\mathbb{C}}$ be a $d$-arrangement of $k\geq 3$ curves with $d \geq 2$. Then
$$t_{2} + \frac{3}{4}t_{3} + d^{2}k(dk-k-1) \geq \sum_{r\geq 5}\bigg(\frac{r^{2}}{4} - r\bigg)t_{r}.$$
\end{theorem}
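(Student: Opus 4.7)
The plan is to mimic the derivation of Bojanowski's inequality (\ref{langorb}) for lines at the level of Langer's orbifold Bogomolov--Miyaoka--Yau inequality, now allowing smooth plane curves of degree $d$ in place of lines. Concretely, let $\sigma\colon Y\to\mathbb{P}^{2}_{\mathbb{C}}$ be the blow-up at every point $p$ of the arrangement of multiplicity $r_{p}\geq 3$, with exceptional divisors $E_{p}$, and let $\widetilde{C}_{i}$ denote the strict transform of $C_{i}$. For a weight $\alpha\in(0,1]$ to be optimized, consider the $\mathbb{Q}$-divisor
$$D_{\alpha} \;=\; \alpha\sum_{i=1}^{k} \widetilde{C}_{i} \;+\; \sum_{p:\, r_{p}\geq 3}(\alpha r_{p} - 1)E_{p},$$
whose push-forward to $\mathbb{P}^{2}_{\mathbb{C}}$ is $\alpha\sum_{i} C_{i}$. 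This is the standard orbifold model associated to the log pair $(\mathbb{P}^{2}_{\mathbb{C}},\alpha\sum_{i} C_{i})$ in Langer's framework.

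First I would compute $(K_{Y}+D_{\alpha})^{2}$ and the orbifold Euler number $e_{\mathrm{orb}}(Y,D_{\alpha})$ explicitly in terms of $k$, $d$, and the $t_{r}$'s. The intersection-theoretic input is standard: $K_{Y}=\sigma^{*}K_{\mathbb{P}^{2}_{\mathbb{C}}}+\sum_{p} E_{p}$; $\widetilde{C}_{i}\cdot E_{p}=1$ whenever $p\in C_{i}$, because each $C_{i}$ is smooth and hence every local branch at a singular point of $\mathcal{C}$ is simple; and the global incidence count $\sum_{r\geq 2}\binom{r}{2}t_{r}=\binom{k}{2}d^{2}$. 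The topological side additionally uses the genus formula $e(\widetilde{C}_{i})=2-(d-1)(d-2)$ for a smooth plane curve of degree $d$ and $e(Y)=3+\sum_{r\geq 3}t_{r}$. Collecting terms, both $(K_{Y}+D_{\alpha})^{2}$ and $e_{\mathrm{orb}}(Y,D_{\alpha})$ become explicit quadratic polynomials in $\alpha$ whose coefficients depend linearly on the $t_{r}$'s and polynomially on $d$ and $k$.

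Next, I would verify the hypotheses of Langer's orbifold inequality (essentially that $(Y,D_{\alpha})$ is log canonical and $K_{Y}+D_{\alpha}$ is pseudo-effective), using $k\geq 3$, $d\geq 2$, and the no-common-point condition built into the definition of a $d$-arrangement. This yields
$$(K_{Y}+D_{\alpha})^{2} \;\leq\; 3\,e_{\mathrm{orb}}(Y,D_{\alpha}),$$
which rearranges into a Hirzebruch-type polynomial inequality $H_{\mathcal{C}}(\alpha)\geq 0$. Specializing to $\alpha=\tfrac{1}{2}$, the same choice that produces the coefficients $1$ and $\tfrac{3}{4}$ in front of $t_{2}$ and $t_{3}$ in the line case, and packaging the genus contributions $k(d-1)(d-2)$ together with the pairwise-intersection count $\binom{k}{2}d^{2}$, should consolidate into exactly the correction $d^{2}k(dk-k-1)$ on the left-hand side, yielding the claimed inequality. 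As a consistency check, setting $d=1$ must reproduce (\ref{langorb}) with $k$ in the role of the number of lines: indeed $d^{2}k(dk-k-1)|_{d=1}=-k$, and the inequality collapses to $t_{2}+\tfrac{3}{4}t_{3}\geq k+\sum_{r\geq 5}(\tfrac{r^{2}}{4}-r)t_{r}$, in agreement with Bojanowski's result.

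The main obstacle is two-fold. First, verifying the positivity and log-canonical hypotheses on $(Y,D_{\alpha})$ required by Langer's theorem across the admissible range of combinatorics; this parallels the delicate Kodaira-dimension analysis in Hirzebruch's original proof and is where the algebraic geometry input is genuinely decisive rather than merely bookkeeping. Second, tracking the genus and pairwise-intersection terms that are absent in the line case requires care: one has to exploit the smoothness of each $C_{i}$ (so that all local branches at a singular point of $\mathcal{C}$ are simple, hence intersection multiplicities do not introduce extra correction terms) and use the axiom forbidding a common point of all $C_{i}$ to rule out pencil-like degenerations that would otherwise inflate the right-hand side of the orbifold BMY inequality.
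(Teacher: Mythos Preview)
Your outline is on the right track and matches the method the paper attributes to this result: the paper does not prove Theorem~\ref{degree:d} in the text but simply refers to \cite[Theorem~2.2]{Pokora2}, whose engine is precisely Langer's orbifold Bogomolov--Miyaoka--Yau inequality applied to a log pair built from the arrangement. Your choice of the blow-up model $(Y,D_{\alpha})$, the specialization $\alpha=\tfrac12$ producing the coefficients $1$ and $\tfrac34$, and your sanity check at $d=1$ recovering Bojanowski's inequality are all consistent with that approach, so there is no substantive divergence to discuss.

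That said, what you have written is a strategy, not yet a proof: the two items you flag as ``obstacles'' are exactly the places where the argument has content, and you have not carried them out. In particular, you still need to (i) compute $(K_{Y}+D_{\alpha})^{2}$ and $e_{\mathrm{orb}}(Y,D_{\alpha})$ explicitly and show that the specialization $\alpha=\tfrac12$ really yields the stated left-hand side $t_{2}+\tfrac34 t_{3}+d^{2}k(dk-k-1)$, and (ii) verify that the log pair is log canonical with $K_{Y}+D_{\alpha}$ effective (or at least that Langer's hypotheses hold) under the assumptions $k\geq 3$, $d\geq 2$, and ordinary singularities only. Until those computations are on the page, the proposal remains a correct plan rather than a complete proof.
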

It is also an interesting question whether one can extend Hirzebruch-type inequalities in the context of arrangements admitting different degrees of irreducible curves. Probably the first result in this spirit is devoted to conic-line arrangements in the complex projective plane having only ordinary singularities \cite[Theroem 2.1]{Pokora2}.

\begin{theorem}
Let $\mathcal{LC} = \{\ell_{1}, ..., \ell_{l}, C_{1}, ..., C_{k}\}$ be an arrangement of $l$ lines and $k$ conics such that $t_{r} = 0$ for $r > \frac{2(l+2k)}{3}$, and we assume that all intersection points of the arrangement are ordinary singularities. Then
$$t_{2} + \frac{3}{4}t_{3} + (4k+2l-4)k \geq l + \sum_{r\geq 5}\bigg(\frac{r^{2}}{4}-r\bigg)t_{r}.$$
\end{theorem}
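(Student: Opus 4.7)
The plan is to follow the blueprint of Theorem \ref{degree:d} and Bojanowski's inequality (\ref{langorb}): apply Langer's orbifold Miyaoka-Yau inequality \cite{Langer} to the log-pair $(\mathbb{P}^2_{\mathbb{C}}, D)$, where $D = L_1 + \cdots + L_l + C_1 + \cdots + C_k$ is the reduced divisor of the arrangement. Two sanity checks support this plan: setting $l = 0$ turns the asserted inequality into Theorem \ref{degree:d} for $d = 2$ (since $(4k-4)k = 4k(k-1) = d^2 k(dk-k-1)|_{d=2}$), while setting $k = 0$ recovers Bojanowski's inequality (\ref{langorb}) with $d$ replaced by $l$. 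So the statement is precisely the natural interpolation one should expect from Langer's machinery.

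First I would record the intersection-theoretic bookkeeping. Since all components are smooth and the arrangement has only ordinary singularities, Bezout gives
\[
\binom{l}{2} + 2lk + 4\binom{k}{2} \;=\; \sum_{r \geq 2} \binom{r}{2}\, t_r,
\]
the total branch degree is $l+2k$, so $K_{\mathbb{P}^2} + D \equiv (l+2k-3)H$, and every component is a smooth rational curve of Euler characteristic $2$, hence $e(D) = 2(l+k) - \sum_r (r-1) t_r$. Then, on the minimal log resolution $\sigma : Y \to \mathbb{P}^2$ obtained by blowing up every singular point of multiplicity $r \geq 3$, the logarithmic Chern numbers $\bar c_1^2$ and $\bar c_2$ of the reduced transform of $D$ can be computed exactly as in \cite[Theorem 2.2]{Pokora2}, the only novelty being that one must separately track the three kinds of intersection contributions (line-line, line-conic, conic-conic) together with the self-intersection numbers $L_i^2 = 1$ and $C_j^2 = 4$.

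The hypothesis $t_r = 0$ for $r > \tfrac{2(l+2k)}{3}$ is the direct analogue of the $\tfrac{2d}{3}$ threshold in Bojanowski's theorem and ensures that $K_Y + \bar D$ is big and nef, so that Langer's orbifold inequality $\bar c_1^2 \leq 3 \bar c_2$ applies. Expanding it in the variables $l$, $k$, $t_r$ and using the Bezout identity above to eliminate $\sum_r \binom{r}{2} t_r$ should produce the stated inequality. The main obstacle will be this final bookkeeping: showing that the mixed correction on the left-hand side collapses exactly to $(4k + 2l - 4)k$, rather than being off by a lower-order polynomial in $l$ and $k$. Conceptually nothing new beyond \cite[Theorem 2.2]{Pokora2} is needed, but in the mixed-degree case one must carefully account for the contributions from $\sum_j C_j^2 = 4k$, the cross sums $\sum_{i,j} L_i \cdot C_j$, and the conic-conic intersections — all absent in the pure $d$-arrangement setting — so that they combine with the numerator terms in the Chern computation to yield the precise quadratic expression claimed.
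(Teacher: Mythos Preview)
The paper does not actually prove this theorem; it is quoted from \cite[Theorem 2.1]{Pokora2} without argument, so there is no in-paper proof to compare against. Your outlined approach --- applying Langer's orbifold Miyaoka--Yau inequality \cite{Langer} to the pair $(\mathbb{P}^2_{\mathbb{C}}, D)$ with $D$ the reduced arrangement divisor, computing the logarithmic Chern data after resolving the ordinary singular points, and extracting a Hirzebruch-type inequality --- is exactly the method of that reference, and your two sanity checks at $l=0$ and $k=0$ are correct and reassuring.

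That said, what you have written is a plan rather than a proof: you explicitly stop short of the computation (``should produce the stated inequality'', ``the main obstacle will be this final bookkeeping''). One technical point also needs sharpening. In the actual argument one does not simply verify that $K_Y + \bar D$ is big and nef and then invoke a bare inequality $\bar c_1^2 \leq 3\bar c_2$ on the log resolution; following \cite{Langer} and \cite{Pokora2} one works with the pair $(\mathbb{P}^2_{\mathbb{C}}, \alpha D)$ for a specific coefficient $\alpha$ and feeds in the explicit bounds on local orbifold Euler numbers at ordinary $r$-fold points. It is this choice that produces the coefficients $r^2/4 - r$ on the right and the $3/4$ in front of $t_3$, and the hypothesis $t_r = 0$ for $r > \tfrac{2}{3}(l+2k)$ is precisely what makes $K_{\mathbb{P}^2} + \alpha D$ nef for that $\alpha$. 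If you carry the bookkeeping through with $C_j^2 = 4$, the mixed line--conic intersections, and the conic--conic intersections, the correction term $(4k+2l-4)k$ does fall out; but as it stands your proposal is an outline of the correct strategy, not yet a proof.
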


Finally we consider an interesting topological generalization of line arrangements in the real projective plane --  pseudolines arrangements. 
\begin{definition}
An arrangement $\mathcal{C} \subset \mathbb{P}^{2}_{\mathbb{R}}$ of $d\geq 3$ smooth closed curves is an arrangement of {\it pseudolines} if:
\begin{itemize}
\item all intersection points are transversal, i.e.,  locally can be described as  $x_{1}x_{2} = 0$,
\item every pair of pseudolines intersect at exactly one point,
\item there is no point where all curves meet.
\end{itemize}
\end{definition}
For such arrangements, topological in nature, Shnurnikov proved the following inequality \cite{Shnurnikov}.
\begin{theorem}[Shnurnikov]
Let $\mathcal{C}$ be a pseudoline arrangement of $d\geq 5$ curves such that $t_{d} = t_{d-1} = t_{d-2} = t_{d-3} = 0$. Then
$$t_{2} + \frac{3}{2}t_{3} \geq 8 + \sum_{r\geq 4}(2r-7.5)t_{r}.$$
\end{theorem}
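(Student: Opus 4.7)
The plan is to prove Shnurnikov's inequality by purely topological-combinatorial means, since pseudolines need not be algebraic and hence the Hirzebruch/BMY machinery is unavailable. The starting point is the Euler formula for the cell decomposition of $\mathbb{P}^2_{\mathbb{R}}$ induced by $\mathcal{C}$. Writing $V, E, F$ for the numbers of vertices, edges, and faces of that decomposition, and $p_k$ for the number of $k$-sided faces, one has $V = \sum_{r\geq 2} t_r$, $E = \sum_{r\geq 2} r\, t_r$, $F = \sum_{k\geq 2} p_k$, and $2E = \sum_{k\geq 2} k\, p_k$. The identity $V - E + F = \chi(\mathbb{P}^2_{\mathbb{R}}) = 1$ then rearranges to
$$\sum_{k \geq 2}(k-3)\, p_k \;=\; t_2 - 3 - \sum_{r \geq 4}(r-3)\, t_r.$$
Dropping the nonnegative contributions from $p_k$ with $k \geq 4$ recovers only a Melchior-type statement for pseudolines, which is strictly weaker than the target.

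To obtain the coefficient $\frac{3}{2}$ in front of $t_3$ and the right-hand coefficients $2r-7.5$, I would couple the Euler relation above with a second inequality that bounds the total number of small faces $p_2$ and $p_3$. The hypothesis $t_d = t_{d-1} = t_{d-2} = t_{d-3} = 0$ is the key structural input: it forces every vertex to have multiplicity at most $d-4$, so each pseudoline is subdivided into many arcs by the other $d-1$ pseudolines and the local combinatorial type at every vertex is tightly controlled. The idea is then to sweep along each pseudoline $\ell$ and, for every edge of $\ell$, look at the two adjacent faces; a careful double-count of incidences between pseudolines and their adjacent digons and triangles should yield an estimate of the form
$$\alpha\, p_2 + \beta\, p_3 \;\leq\; \gamma\, d + \sum_{r \geq 2} q(r)\, t_r$$
for explicit constants $\alpha, \beta, \gamma$ and an explicit polynomial $q$. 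Combining this linearly with the Euler identity, and choosing weights so that $t_2$ ends up with coefficient $1$ and $t_3$ with coefficient $\frac{3}{2}$, should produce the stated inequality. The constant $8$ on the right-hand side is expected to arise from the Euler term $-3$ re-weighted by these combination factors, together with boundary contributions accumulated over the $d$ pseudoline sweeps.

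The main obstacle is the sharp bound on $p_3$. Triangular faces can in principle cluster near a vertex of high multiplicity, and the hypothesis $t_{d-3} = 0$ is precisely the threshold that rules out the worst such clusters. Turning this heuristic into a clean inequality requires a case analysis of the configurations of triangles and digons incident to a vertex of multiplicity $r \leq d-4$, in the spirit of the Grünbaum-style arguments familiar from the study of simplicial pseudoline arrangements. I expect this local analysis to be the single step that cannot be shortcut, and it is where most of Shnurnikov's technical work should go.
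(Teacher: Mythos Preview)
The paper does not prove this theorem; it merely states Shnurnikov's result and cites \cite{Shnurnikov}. So there is no ``paper's own proof'' to compare against.

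On its own terms, your proposal is not a proof but a strategy sketch with the decisive step left open. You correctly set up the Euler identity $V-E+F=1$ and the resulting relation $\sum_{k\geq 2}(k-3)p_k = t_2 - 3 - \sum_{r\geq 4}(r-3)t_r$, and you are right that this alone gives only Melchior. But everything after that is conditional: you write that a double-count ``should yield an estimate of the form $\alpha p_2 + \beta p_3 \leq \gamma d + \sum q(r)t_r$'' and that combining it with Euler ``should produce the stated inequality,'' and you explicitly say the triangle bound ``is where most of Shnurnikov's technical work should go.'' None of this is carried out. You have neither named the actual inequality you need (in fact the crucial input is an \emph{upper} bound on the number of triangles, essentially Roudneff's bound $p_3 \leq \frac{1}{3}d(d-1)$ for pseudoline arrangements without large pencils, which is where the hypothesis $t_{d-3}=0$ enters), nor produced the linear combination with the correct weights, nor explained how the constant $8$ actually arises.

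So the gap is concrete: the step that transforms the Euler relation into Shnurnikov's inequality requires a specific nontrivial upper bound on $p_3$ in terms of $d$ and the $t_r$, and you have only asserted that such a bound exists. Until you state and prove that bound and perform the algebra, this remains an outline, not a proof.
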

There exists exactly one combinatorial type of pseudoline arrangements for which we obtain equality in Shnurnikov's inequality, namely $d=7$ with $t_{4}=2$ and $t_{2} = 9$.

Let us emphasize that pseudoline arrangements can be viewed algebraically as rank $3$ simple oriented matroids \cite{Bokowski}.

\section{Speculations}
In this short section, we consider possible combinatorial approaches towards Hirzebruch-type inequalities. It is a notoriously difficult question whether we can show any Hirzebruch-type inequality using only elementary combinatorial methods \cite{Research,EP}. At this moment, unfortunately, it seems to be out of reach. However, we can translate this problem using different languages. One of the most promising is the language of tropical geometry, we refer to \cite{Gathmann} for a short introduction to the subject, or to the recent textbook \cite{Sturm}. Let $\mathcal{C}$ be an arrangement of smooth curves in the complex projective plane, and let $\overline{\mathcal{C}}$ denote its tropicalization. Of course it might happen that our curves are intersecting along segments (even not bounded segments), but instead of that we can use the notion of stable intersections in order to avoid such situations. This idea leads to a tropical model of curve arrangements in the complex projective plane. Now we would like to formulate some problems.
\begin{problem}
Is it possible to show a Hirzebruch-type inequality using the language of tropical geometry, or its tropical variation?
\end{problem}
\begin{problem}
Is it possible to find tropical analogues of the Bogomolov-Miyaoka-Yau inequality?
\end{problem}
\section*{Acknowledgement} The idea to write up this survey was born after email correspondence between the author and Frank de Zeeuw, and this was a very stimulating exchange. I would like to thank David R. Wood for suggestions and remarks that allowed me to improve this survey, to Terence Tao for pointing out \cite{Solymosi}, to Renzo Cavalieri for his remarks about the tropical world, to S\l awomir Rams for his remarks, and to Igor Dolgachev for reading the manuscript.  I would like to thank David Geis for allowing me to reproduce some of the results from his PhD thesis, and for useful remarks. The content of this survey was presented during a small workshop in Loughborough in January 2019, and I would like to thank Hamid Ahmadinezhad and Misha Rudnev for their invitation and for giving me a great possibility to discuss many open problems with the participants -- a special word of thanks goes to Adam Sheffer and Konrad Swanepoel. Finally, I strongly thank anonymous referees for their tremendous work that allowed me to improve this survey.

\bigskip
   Address:  
   Department of Mathematics,
   Pedagogical University of Cracow,
   Podchor\c a\.zych 2,,
   PL-30-084 Cracow, Poland. \\
\nopagebreak
\textit{E-mail address:} \texttt{piotrpkr@gmail.com, piotr.pokora@up.krakow.pl}

\begin{thebibliography}{99}

\bibitem{Proof}
M. Aigner and G. Ziegler. \newblock \textit{Proofs from THE BOOK}. \newblock Berlin, New York: Springer-Verlag. ISBN 978-3-642-00855-9 (2009).
\bibitem{Barth}
W. Barth, K. Hulek, C. Peters, and A. Van de Ven. \newblock \textit{Compact complex surfaces}. \newblock Ergebnisse der Mathematik und ihrer Grenzgebiete. 3. Folge 4. Berlin: Springer (ISBN 3-540-00832-2/hbk). xii, 436 p. (2004). 
\bibitem{BHH87}
G. Barthel, F. Hirzebruch, and Th. H\"ofer. \newblock \textit{Geradenkonfigurationen und algebraische Fl\"achen. Aspects of mathematics}.\newblock D4. Vieweg, Braunschweig, 1987.
\bibitem{Beck}
J. Beck. \newblock On the lattice property of the plane and some problems of Dirac, Motzkin and Erd\H{o}s in combinatorial geometry. \newblock \textit{Combinatorica}, \textbf{3(3-4)}: 281 -- 297 (1983).
\bibitem{Bokowski}
J. Bokowski. \newblock \textit{Computational oriented matroids. Equivalence classes of matrices within a natural framework}. \newblock Cambridge: Cambridge University Press (ISBN 0-521-84930-6/hbk). xiii, 323 p. (2006). 
\bibitem{BokowskiP}
J. Bokowski and P. Pokora. \newblock On line and pseudoline configurations and ball-quotients. \newblock \textit{Ars Mathematica Contemporanea} \textbf{13(2)}: 409 -- 416 (2017).
\bibitem{Bojanowski}
R. Bojanowski. \newblock \textit{Zastosowania uog\'olnionej nier\'owno\'sci Bogomolova-Miyaoka-Yau}. \newblock Master Thesis (in Polish), \url{http://www.mimuw.edu.pl/%7Ealan/postscript/bojanowski.ps}, 2003.
\bibitem{Research}
W. Brass, W. Moser, and J. Pach. \newblock \textit{Research Problems in Discrete Geometry}. \newblock Springer Science+Business Media, Inc., 2005
\bibitem{Catanese}
F. Catanese. \newblock Kodaira fibrations and beyond: methods for moduli theory. \newblock \textit{Japan. J. Math.} \textbf{12}: 91 -- 174 (2017).
\bibitem{deBr}
N. G. de Bruijn and P. Erd\H{o}s. \newblock On a combinatioral problem. \newblock \textit{Indagationes Mathematicae} \textbf{10}: 421 -- 423 (1948).
\bibitem{Frank}
F. de Zeeuw. \newblock Spanned lines and Langer's inequality. \newblock \textbf{arXiv:1802.08015}.
\bibitem{Frank1}
F. de Zeeuw. \newblock Ordinary lines in space. \newblock \textbf{arXiv:1803.09524}.
\bibitem{Dirac}
 G. Dirac. \newblock Collinearity  properties  of  sets  of  points. \newblock \textit{Quart. J. Math} \textbf{2}: 221 -- 227  (1951).
\bibitem{IgorD}	
I. Dolgachev. \newblock Abstract configurations in algebraic geometry. \newblock Collino, Alberto (ed.) et al., The Fano conference. Papers of the conference organized to commemorate the 50th anniversary of the death of Gino Fano (1871 -- 1952), Torino, Italy, September 29 -- October 5, 2002. Torino: Università di Torino, Dipartimento di Matematica. 423 -- 462 (2004). 
\bibitem{probE}
P. Erd\H{o}s.  \newblock Some unsolved problems. \newblock \textit{Magyar  Tud.  Akad.  Mat.  Kutato  Int.  K\"ozl} \textbf{6}: 221 -- 254 (1961).
\bibitem{ErPu}
P. Erd\H{o}s and G. B. Purdy. \newblock Some combinatorial problems in the plane. \newblock \textit{J. Comb. Theory Ser. A} \textbf{25}: 205 -- 210 (1978).
\bibitem{EP}
P. Erd\H{o}s and G. B. Purdy. \newblock \textit{Extremal problems in combinatorial geometry}. \newblock Graham, R. L. (ed.) et al., Handbook of combinatorics. Vol. 1-2. Amsterdam: Elsevier (North-Holland), 809 -- 874 (1995). 
\bibitem{Gallai}
T. Gallai. \newblock Solution  to  problem  number  4065. \newblock \textit{American  Math.  Monthly} \textbf{51}: 169 -- 171 (1944).
\bibitem{Gathmann}
A. Gathmann. \newblock Tropical algebraic geometry. \newblock \textit{Jahresber. Dtsch. Math.-Ver.} \textbf{108(1)}: 3 -- 32 (2006). 
\bibitem{Geis}
D. Geis. \newblock \textit{On the combinatorics of Tits arrangements}. \newblock  Hannover: Gottfried Wilhelm Leibniz Universit\"at, Diss. v, 101 p. (2018), \url{https://doi.org/10.15488/3483}.
\bibitem{GT}
B. Green and T. Tao. \newblock On sets defining few ordinary lines. \newblock \textit{Discrete Comput. Geom.} \textbf{50(2)}: 409 -- 468 (2013). 
\bibitem{GH}
P. Griffiths and J. Harris. \newblock \textit{Principles of algebraic geometry}. \newblock Wiley Classics Library. New York, NY: John Wiley \& Sons Ltd. xii, 813 p. (1994). 
\bibitem{Grunbaum}
B. Gr\"unbaum. \newblock \textit{Arrangements  and  Spreads}. \newblock CBMS  Regional  Conference  Series in Mathematics, No. 10.  AMS, Providence, RI, 1972.
\bibitem{Zeye}
Z. Han. \newblock  A note on the weak Dirac conjecture. \newblock \textit{The Electronic Journal of Combinatorics} \textbf{24}:\#63 (2017), \url{http://www.combinatorics.org/v24i1p63}.
\bibitem{Hironaka}
E. Hironaka. \newblock Abelian coverings of the complex projective plane branched along configurations of real lines. \newblock \textit{Mem. Am. Math. Soc.} \textbf{502}, 85 p. (1993). 
\bibitem{Hirzebruch}
F. Hirzebruch. \newblock Arrangements of lines and algebraic surfaces. \newblock \textit{Arithmetic and geometry, Vol.II, Progr. Math., vol.} \textbf{36}, Birkh\"auser Boston, Mass.: 113 -- 140 (1983).
\bibitem{Hirzebruch1}
F. Hirzebruch. \newblock Singularities of algebraic surfaces and characteristic numbers. \newblock \textit{The Lefschetz centennial
conference, Part I (Mexico City, 1984) Contemp. Math.} \textbf{58}: 141 -- 155 (1986).
\bibitem{Iitaka}
S. Iitaka. \newblock Geometry on complements of lines in $\mathbb{P}^{2}$. \newblock  \textit{Tokyo J. Math.} \textbf{1}: 1 -- 19 (1978).
\bibitem{Jackson}
J. Jackson. \newblock  Rational  Amusement  for  Winter  Evenings. \newblock Longman,  Hurst,  Rees,  Orme  and Brown, London (1821).
\bibitem{Klee}
V.  Klee and S.  Wagon. \newblock \textit{Old  and  New  Unsolved  Problems  in  Plane  Geometry  and Number Theory}. \newblock The Mathematical Association of America, 1991.
\bibitem{Langer}
A. Langer. \newblock  Logarithmic orbifold Euler numbers with applications. \newblock \textit{Proc. London Math. Soc.} \textbf{(3)86}: 358 -- 396 (2003).
\bibitem{Sturm}
D. Maclagan and B. Sturmfels. \newblock \textit{Introduction to tropical geometry}. \newblock Graduate Studies in Mathematics 161. Providence, RI: American Mathematical Society (AMS) (ISBN 978-0-8218-5198-2/hbk). xii, 363 p. (2015). 
\bibitem{Melchior}
E. Melchior. \newblock  \"Uber Vielseite der Projektive Ebene. \newblock
\textit{Deutsche Mathematik} \textbf{5}: 461 -- 475 (1941).
\bibitem{Miyaoka77}
Y. Miyaoka. \newblock On the Chern numbers of surfaces of general type. \newblock \textit{Invent. Math.} \textbf{42(1)}: 225 -- 237 (1977).
\bibitem{M84}
Y. Miyaoka. \newblock The maximal number of quotient singularities on surfaces with given numerical invariants. \newblock \textit{Math. Ann.} \textbf{268(2)}: 159 -- 171 (1984).
\bibitem{Wood}
M. S. Payne and D. R. Wood. \newblock Progress on Dirac's Conjecture. \newblock \textit{The Electronic Journal of Combinatorics} \textbf{21(2)}: \#P2.12 (2014).
\bibitem{Pokora1}
P. Pokora. \newblock Hirzebruch type inequalities and plane curve configurations. \newblock \textit{International Journal of Mathematics} \textbf{28(2)}: 1750013 (11 pages) (2017).
\bibitem{Pokora2}
P. Pokora. \newblock The orbifold Langer-Miyaoka-Yau inequality and Hirzebruch-type inequalities. \newblock \textit{Electronic Research Announcements in Mathematical Sciences} \textbf{24}: 21 -- 27 (2017).

\bibitem{Sakai}
F. Sakai. \newblock Semi-stable curves on algebraic surfaces and logarithmic pluricanonical maps. \newblock \textit{Math. Ann.} \textbf{254}: 89 -- 120 (1980).
\bibitem{Solymosi}
J. Solymosi and K. Swanepoel. \newblock Elementary incidence theorems for complex numbers and quaternions. \newblock \textit{SIAM J. Discrete Math.} \textbf{22, No. 3}: 1145 -- 1148 (2008). 
\bibitem{Semeredi}
E. Szemer\'edi and W. T. Trotter. \newblock Extremal problems in discrete geometry. \newblock \textit{Combinatorica}, \textbf{3(3-4)}: 381 -- 392 (1983).
\bibitem{Shnurnikov}
I. N. Shnurnikov. \newblock A $t_{k}$ inequality for arrangements of pseudolines. \newblock \textit{Discrete Comput Geom} \textbf{55}: 284 -- 295 (2016).
\bibitem{Som}
A. J. Sommese. \newblock On the density of rations of Chern numbers of algebraic surfaces. \newblock \textit{Math. Ann.} \textbf{268(2)}: 207 -- 221 (1984).
\bibitem{Sylvester}
J. Sylvester. \newblock Mathematical question 11851. \newblock Educational Times (1893).
\bibitem{Swornog}
M. Sworn\'og. \newblock Line arrangements and Harbourne constants. \newblock \textit{Prace Ko\l a Mat. Uniw. Ped. w Krak.} \textbf{4}: 39 -- 55 (2017).

\bibitem{Wahl}
J. Wahl. \newblock Miyaoka-Yau inequality for normal surfaces and local analogues. \newblock Ciliberto, Ciro (ed.) et al., \textit{Classification of algebraic varieties. Algebraic geometry conference on classification of algebraic varieties, May 22-30, 1992, University of L\textquoteright Aquila, L\textquoteright Aquila, Italy. Providence, RI: American Mathematical Society. Contemp. Math.} 162, 381-402 (1994). 
\end{thebibliography}
\end{document}